\newlist{thmlist}{enumerate}{1}
\setlist[thmlist]{label=(\roman{thmlisti}),
                  ref=\thethm.(\roman{thmlisti}),
                  noitemsep}
\Crefname{thm}{Theorem}{Theorems}
\Crefname{lem}{Lemma}{Lemmas}
\Crefname{listthm}{Theorem}{Theorems}
\Crefname{listlem}{Lemma}{Lemmas}
\newcolumntype{W}{>{\columncolor{gray!10}\centering\arraybackslash}X}
\newcolumntype{Y}{>{\columncolor{gray!20}\centering\arraybackslash}X}
\newcolumntype{Z}{>{\columncolor{gray!30}\centering\arraybackslash}X}
\newcommand{\invref}[1]{\hyperref[#1]{(I\ref*{#1})}\xspace}
\newcommand{\tw}{\operatorname{tw}}
\newcommand{\dist}{\operatorname{dist}}
\title{Clustered independence and bounded treewidth}
\author{Kolja Knauer}{School of Mathematical Sciences, Hebei Key Laboratory of Computational Mathematics and Applications, Hebei Normal University, Shijiazhuang 050024, P. R. China, Departament de Matem\'{a}tiques i Inform\'{a}tica, Universitat de Barcelona and Centre de Recerca Matemàtica, Barcelona, Spain.}{kolja.knauer@ub.edu}{https://orcid.org/0000-0002-8151-2184}{The grant of The Natural Science Foundation of Hebei Province (project No. A2023205045),
 PID2022-137283NB-C22 funded by MICIU/AEI/10.13039/501100011033 and ERDF/EU, Severo Ochoa and María de Maeztu Program for Centers and Units of Excellence in R\&D (CEX2020-001084-M), ANR project MIMETIQUE: ANR-25-CE48-4089-01.}
\author{Torsten Ueckerdt}{Karlsruhe Institute of Technology}{torsten.ueckerdt@kit.edu}{https://orcid.org/0000-0002-0645-9715}{Funded by the Deutsche Forschungsgemeinschaft (DFG, German Research Foundation) – 520723789}
\authorrunning{K. Knauer and T. Ueckerdt}
\keywords{treewidth, clustered sets}
\begin{document}

\maketitle

\begin{abstract}
 A set $S\subseteq V$ of vertices of a graph $G$ is a \emph{$c$-clustered set} if it induces a subgraph with components of order at most $c$ each, and $\alpha_c(G)$ denotes the size of a largest $c$-clustered set. 
    For any graph $G$ on $n$ vertices and treewidth $k$, we show that $\alpha_c(G) \geq \frac{c}{c+k+1}n$, which improves a result of Dvo\v{r}{\'a}k and Wood [Innov.\ Graph Theory, 2025], while we construct $n$-vertex graphs $G$ of treewidth~$k$ with $\alpha_c(G)\leq \frac{c}{c+k}n$.
    In the case $c\leq 2$ or $k=1$ we prove the better lower bound $\alpha_c(G) \geq \frac{c}{c+k}n$, which settles a conjecture of Chappell and Pelsmajer [Electron.\ J.\ Comb., 2013] and is best-possible.
    Finally, in the case $c=3$ and $k=2$, we show $\alpha_c(G) \geq \frac{5}{9}n$ which is best-possible.
\end{abstract}

%%%%%%%%%%%%%%%%%%%%%%%
%%                   %%
%%    INTRODUCTION   %%
%%                   %%
%%%%%%%%%%%%%%%%%%%%%%%
\section{Introduction}

Let $G = (V,E)$ be a graph and $c$ a positive integer.
We call a subset $S \subseteq V$ of vertices of $G$ a \emph{$c$-clustered set} if every connected component of the subgraph $G[S]$ of $G$ induced by $S$ has at most $c$ vertices.
We define the \emph{$c$-clustered independence number} $\alpha_c(G)$ of $G$ as the maximum size of a $c$-clustered set in $G$.
In particular, the $1$-clustered sets of $G$ are exactly the independent sets of $G$ and $\alpha_1(G)$ equals the independence number $\alpha(G)$.
On the other hand, for each $2$-clustered set $S$ of $G$ the subgraph $G[S]$ is a collection of vertices and edges in $G$ with no edge of $G$ between these components, and $\alpha_2(G)$ is the largest number of vertices in $G$ inducing only isolated vertices and edges.

\begin{figure}
    \centering
    \includegraphics{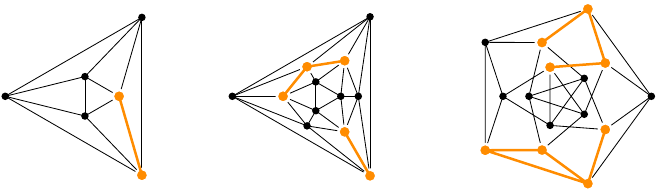}
    \caption{Examples of a $2$-clustered set (left), a $3$-clustered set (middle), and a $4$-clustered set (right).}
    \label{fig:example-clustered-sets}
\end{figure}

In the literature, $c$-clustered sets appear primarily as the color classes of $c$-clustered colorings.
A $t$-coloring $\phi \colon V \to [t]$ of the vertices of a graph $G=(V,E)$ is a \emph{$c$-clustered coloring} if there is no monochromatic connected subgraph on more than $c$ vertices in $G$.
In other words, each color class $\phi^{-1}(i)$, $i=1,\ldots,t$, is a $c$-clustered set.
The \emph{$c$-clustered chromatic number} $\chi_c(G)$ is then the minimum $t$ for which $G$ admits a $c$-clustered $t$-coloring.
So, for example, $\chi_1(G)$ is equal to the classical chromatic number $\chi(G)$, while $\chi_2(G)$ is the smallest number of $2$-clustered sets into which the vertex set $V(G)$ can be partitioned.
Clearly, for any $c \geq 1$ and any graph $G$ we have
\begin{equation}
    \alpha_c(G) \geq \frac{|V(G)|}{\chi_c(G)}, \quad \text{ or equivalently } \quad \frac{\alpha_c(G)}{|V(G)|} \geq \frac{1}{\chi_c(G)}.\label{eq:trivial-LB}
\end{equation}

In this paper, we focus on the quantity $\alpha_c(G) / |V(G)|$, i.e., the proportion of vertices of $G$ that we can put into a $c$-clustered set, and seek to find (for graphs of a particular class) universal lower bounds that significantly improve on the $1/\chi_c(G)$ in~\cref{eq:trivial-LB}.
We shall focus on graphs of treewidth $k$, as  introduced by Bertelè and Brioschi~\cite{BB72}, rediscovered by Halin~\cite{Hal76}, and again rediscovered by Robertson and Seymour~\cite{RS86}. We also briefly discuss other graph classes at the very end.

%%%%%%%%%%%%%%%%%%%%%%%
%%    our results    %%
%%%%%%%%%%%%%%%%%%%%%%%
\subparagraph*{Our Results.}

We are interested in the largest $c$-clustered independence number guaranteed in each $n$-vertex graph $G$ with $\tw(G) \leq k$, i.e., graph of treewidth at most $k$.
To this end, let us define
\begin{equation}
    x_{k,c} = \inf\{ \frac{\alpha_c(G)}{|V(G)|} \colon \tw(G) \leq k\}.\label{eq:definition-x-kc}
\end{equation}

In fact, for lower bounds $\ell_{k,c}\leq x_{k,c}$ we will show the slightly stronger statement that every graph $G$ of treewidth at most $k$ satisfies $\alpha_c(G) \geq \ell_{k,c}\cdot |V(G)|$.
Similarly, for our upper bounds $u_{k,c}\geq x_{k,c}$ we shall construct an infinite set of graphs $G$ of treewidth $k$ with $\alpha_c(G) \leq \lceil u_{k,c}\cdot |V(G)|\rceil$ for each such $G$.
Our results on $x_{k,c}$ are summarized in the following theorem and illustrated in~\cref{fig:all-bounds}.

\begin{theorem}\label{thm:main}
~
    \begin{enumerate}[a]
        \item $\frac{c}{k+c+1} \, \leq \, x_{k,c} \, \leq \, \frac{c}{k+c}$ for every $c,k \geq 1$.\label{enum:general-c-k}
     
        \item $x_{1,c} = \frac{c}{1+c}$ for every $c \geq 1$.\label{enum:k1}
     
        \item $x_{k,1} = \frac{1}{k+1}$ and $x_{k,2} = \frac{2}{k+2}$ for every $k \geq 1$.\label{enum:c1-and-c2}
     
        \item $x_{2,3} = \frac{5}{9}$.\label{enum:c3-and-k2}
    \end{enumerate}
\end{theorem}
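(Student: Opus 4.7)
My plan is to address the four parts through a common inductive strategy, locating in a nice tree decomposition a \emph{removable piece}: a separator $X \subseteq V(G)$ of size $\leq k+1$ paired with $V' \subseteq V(G) \setminus X$, a union of components of $G - X$ with $|V'| \leq c$ (so $V'$ is $c$-clustered in $G[V']$). Placing $V'$ into the target set $S$ and recursing on $G - (X \cup V')$ closes the induction whenever the balance $(c+k+1)|V'| \geq c(|V'| + |X|)$, equivalently $(k+1)|V'| \geq c|X|$, holds, matching the desired ratio $c/(c+k+1)$ of part~\ref{enum:general-c-k}.

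For the lower bound in part~\ref{enum:general-c-k}, I track the function $f(t) := |W_t \setminus B_t|$ on the nodes of a nice tree decomposition, where $W_t$ denotes the vertex set of the subtree rooted at $t$. Since $f$ increments by one at forget nodes, is unchanged at introduce nodes, and is additive at joins, walking from the root downward into a child with $f \geq c$ terminates at a deepest node $t$ with $f(t) \geq c$ but $f < c$ on each child of $t$. A brief case analysis shows $t$ is either a forget node (so $|V'| = c$ and $|X| = |B_t| \leq k$) or a join node (where $V' = (W_{t_1} \setminus B_t) \cup (W_{t_2} \setminus B_t)$ has size in $[c, 2c-2]$, $|X| \leq k+1$, and the separator property follows from $B_{t_1} = B_{t_2} = B_t$). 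Both cases satisfy $(k+1)|V'| \geq c|X|$, so induction closes. The matching upper bound $x_{k,c} \leq c/(c+k)$ is witnessed by disjoint copies of $H := K_k + \overline{K_c}$ (the join of a $k$-clique and $c$ isolated vertices), which has $c+k$ vertices, treewidth $k$, and $\alpha_c(H) = c$ because once any vertex of the $K_k$-side enters $S$, the whole component spans $K_k$ plus all independent vertices in $S$, capping the total at $c$.

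For part~\ref{enum:k1}, forests admit a cheaper piece: rooting each tree and picking a deepest vertex $v$ with $|T_v| > c$, every child's subtree has size $\leq c$; setting $X = \{v\}$ and $V' = T_v \setminus \{v\}$ gives $|V'| = |T_v| - 1 \geq c$ against $|X| = 1$, and the recursion $\alpha_c(T) \geq (|T_v| - 1) + c(n - |T_v|)/(c+1)$ simplifies to $\alpha_c(T) \geq cn/(c+1)$. The upper bound comes from disjoint copies of the star $K_{1,c}$. For part~\ref{enum:c1-and-c2}, the case $c=1$ is the classical $(k+1)$-coloring of any $k$-degenerate graph, tight on $K_{k+1}$. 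The case $c=2$ requires the tighter balance $k|V'| \geq c|X|$, i.e., $|X| \leq k$ when $|V'| = 2$; at forget nodes this is automatic since $|B_t| \leq k$, while join nodes with $|B_t| = k+1$ are resolved by splitting the induction across the two subtrees, each strictly smaller than $G$. The matching upper bound is disjoint copies of $K_k + \overline{K_2}$.

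For part~\ref{enum:c3-and-k2}, bags of a width-$2$ nice decomposition have at most $3$ vertices. I plan to enumerate the local configurations (edge pattern among the bag vertices together with the current partial component structure in the $3$-clustered set built so far) and design each local choice so as to achieve amortized ratio $5/9$ per group of $9$ vertices processed. The matching upper bound is a specific $9$-vertex graph of treewidth $2$ with $\alpha_3 = 5$, repeated disjointly. The main obstacles I anticipate are the refined join-node handling in the $c=2$ refinement of part~\ref{enum:c1-and-c2}, where the naive analysis is just shy of the sharp bound, and the exhaustive case analysis for part~\ref{enum:c3-and-k2}, where the sharp constant $5/9$ requires that every local decision amortize correctly.
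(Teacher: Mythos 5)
Your parts (1) and (2), and the $c=1$ half of part (3), are correct and essentially the paper's own argument in different notation: the paper uses $k$-tree models, where a lowest vertex with at least $c$ descendants plays exactly the role of your deepest node with $f(t)\ge c$, and its extremal examples are the same graphs $K_k+\overline{K_c}$, respectively $K_{1,c}$. The substance of the theorem, however, is the $c=2$ case of part (3) (the Chappell--Pelsmajer conjecture) and part (4), and there your proposal has genuine gaps. For $c=2$ the only hard case is precisely the one you defer: a join node whose bag $B$ has $k+1$ vertices, with $f(t_1)=f(t_2)=1$, i.e.\ two nonadjacent vertices $a_1,a_2$ whose neighbourhoods lie in $B$ and may together cover all of $B$. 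Locally you gain $2$ vertices against up to $k+3$ removed, which falls short of $\frac{2}{k+2}$, and no decision confined to $B\cup\{a_1,a_2\}$ can close this gap. ``Splitting the induction across the two subtrees'' does not repair it either: the two recursive $2$-clustered sets both live partly on $B$, their union need not be $2$-clustered, and the vertex counts double-count $B$, so there is no way to merge them that recovers the loss. The paper instead runs a global amortized token/discharging argument with invariants, whose hardest case uses a recursive exchange: it contracts the two critical vertices into one artificial vertex, recurses, and only afterwards decides which of the two to keep, depending on whether the recursive solution meets their common parents. Some idea of this kind is unavoidable and is absent from your plan (you yourself flag it as unresolved).

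For part (4), the lower bound in your proposal is only an intention; the actual content of the paper's proof is a carefully designed potential, the surplus $9|S_W|-5\left(|V(G_W)|-2\right)$ together with ``threats'' at both endpoints of the parent edge, which closes into a finite list of nine types. More seriously, your upper bound cannot work as stated: there is no $9$-vertex graph of treewidth $2$ with $\alpha_3=5$. Complete such a graph to a $2$-tree and take its tree decomposition into triangles, where adjacent bags share two vertices. If every bag left a component with at least $4$ of the remaining $6$ vertices, orient each node towards the side of the tree containing its (unique) big component; either some node has no out-arrow, or two adjacent nodes point at each other, and then the two big components are disjoint from each other and from the two shared bag vertices, giving at least $4+4+2=10>9$ vertices, a contradiction. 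Hence some bag $B$ of at most $3$ vertices leaves only components of size at most $3$, so the $6$ vertices outside $B$ form a $3$-clustered set and $\alpha_3\ge 6$. This is exactly why the paper's extremal family is not a disjoint union of fixed blocks but a linked chain of blocks sharing vertices, with $9i+1$ vertices and $\alpha_3\le 5i+1$: the ratio $\frac59$ is approached only in the limit and is never attained by disjoint copies of a single graph.
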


\begin{figure}
    \centering
    \includegraphics{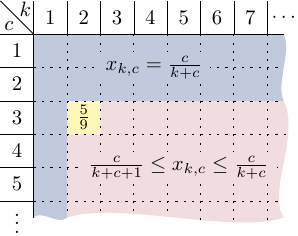}
    \caption{Illustration of the bounds on $x_{k,c} = \inf\{\alpha_c(G)/|V(G)| \colon \tw(G) = k\}$ in \cref{thm:main}.}
    \label{fig:all-bounds}
\end{figure}

That is, we show a general upper bound of $x_{k,c} \leq \frac{c}{k+c}$ and a general lower bound of $x_{k,c} \geq \frac{c}{k+c+1}$ that is just an additive $1$ in the denominator away, cf.~\Cref{thm:main}\labelcref{enum:general-c-k}.
We also show our upper bound to be tight if $k = 1$ or $c \leq 2$, cf.~\Cref{thm:main}\labelcref{enum:k1} and~\Cref{thm:main}\labelcref{enum:c1-and-c2}.
But somewhat surprisingly, in the smallest case $k=2$ and $c=3$ not covered by the previous results, we show that $x_{2,3} = \frac59$, cf.~\Cref{thm:main}\labelcref{enum:c3-and-k2}. This matches neither the general upper nor the general lower bound in~\Cref{thm:main}\labelcref{enum:general-c-k}.
We have no obvious good candidate for the true value of $x_{k,c}$ in general and wonder if there are further infinite families of parameters for which simple formulas for $x_{k,c}$ exist.

%%%%%%%%%%%%%%%%%%%%%%%
%%   organisation    %%
%%%%%%%%%%%%%%%%%%%%%%%
\subparagraph*{Organisation of the paper.}

After discussing some related work and previously known bounds on $x_{k,c}$, we start in \cref{sec:treewidth-intro} with a short introduction of the model for treewidth-$k$ graphs that we primarily use in our proofs.
In \cref{sec:treewidth-intro} we also explain that \cref{eq:trivial-LB} does not yield interesting lower bounds on $x_{k,c}$ if $c \neq 1$.

We then prove \cref{thm:main}\labelcref{enum:general-c-k} in \cref{sec:general-case}, \cref{thm:main}\labelcref{enum:k1} in \cref{sec:k1-case}, \cref{thm:main}\labelcref{enum:c1-and-c2} in \cref{sec:c2-case}, and \cref{thm:main}\labelcref{enum:c3-and-k2} in \cref{sec:c3-k2-case}.
Finally, we give a brief discussion of $c$-clustered independence numbers in other classes of graphs in \cref{sec:conclusions}.

%%%%%%%%%%%%%%%%%%%%%%%
%%   related work    %%
%%%%%%%%%%%%%%%%%%%%%%%
\subparagraph*{Related Work.}

The $c$-clustered independence number $\alpha_c(G)$ has been considered under the name \emph{$t$-component stability number} by Broutin and Kang~\cite{BK18} in 2018 as a tool to give a lower bound on $\chi_c(G)$ (specifically for the Erd\H{o}s-R\'enyi random graph $G_{n,p}$) by using \cref{eq:trivial-LB}.
Apart from that, $\alpha_c(G)$ has (to the best of our knowledge) not been further considered or investigated.
But there is a number of equivalent or closely related concepts in the literature.
Most relevant to us are the following results of Dvo\v{r}\'ak and Wood~\cite{Wo}, and Chappell and Pelsmajer~\cite{ChPe}.

\begin{theorem}[{Dvo\v{r}\'ak and Wood~\cite[Theorem 5.3]{Wo}}]\label{wood}{\ \\}
    Let $G=(V,E)$ be a graph on $n$ vertices and treewidth at most $k$.
    If $n\leq \lfloor\frac{p}{k+1}\rfloor(c+1)+k+c-1$ and $k+1\leq p$, then there is a set $S\subseteq V$ of size $p$, such that all connected components of $G\setminus S$ have order at most $c$. 
\end{theorem}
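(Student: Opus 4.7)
The plan is a proof by strong induction on $n$. The base case is $n \leq k + c$: any set $S$ of size $p$ that contains at least one bag $B$ of the tree decomposition of $G$ leaves $|V \setminus S| \leq n - |B| \leq c - 1$ vertices, so every component of $G \setminus S$ trivially has at most $c$ vertices.

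For the inductive step, fix a tree decomposition $(T, \{B_t\}_{t \in V(T)})$ of $G$ of width $k$ and root $T$ at an arbitrary node. Writing $V_t$ for the union of the bags in the subtree of $T$ rooted at $t$, the key property is that $B_t$ separates $V_t \setminus B_t$ from the rest of $V(G)$. I aim to find a ``pocket'' $W \subseteq V_{t^*} \setminus B_{t^*}$ at some node $t^*$ with $|W| \geq c + 1$ and with every connected component of $G[W]$ of size at most $c$. Given such $W$, include $B_{t^*}$ in $S$ (spending at most $k + 1$ of the budget $p$), delete $W$ from $G$ (removing at least $c + 1$ vertices), and apply induction to $G' = G - W$ with reduced budget $p' = p - |B_{t^*}|$. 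Since dropping at most $k + 1$ from $p$ decreases $\lfloor p / (k+1) \rfloor$ by at most one, the right-hand side of the hypothesis drops by at most $c + 1$, matching the decrease $n - n'$; the induction arithmetic goes through cleanly, with some care needed when $p'$ drops below $k+1$, which forces $n'$ to already sit in the base-case range.

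The main obstacle is producing the pocket $W$. A first attempt is to choose $t^*$ as deep as possible with $|V_{t^*} \setminus B_{t^*}| \geq c + 1$ and take $W := V_{t^*} \setminus B_{t^*}$. This works cleanly when $t^*$ has several children: distinct children $t_1', t_2'$ contribute disjoint vertex sets $V_{t_i'} \setminus B_{t^*}$ whose vertices lie in different components of $G[V_{t^*} \setminus B_{t^*}]$ (any edge between them would have to live in a common bag of $T$, which can only be $B_{t^*}$), and by the minimality of $t^*$ each child contribution has at most $c$ vertices. The harder case is when $t^*$ has just a single child, as happens in a path-like tree decomposition where $|V_t|$ increases by only one going up the tree; there the child's contribution can itself be a single connected subgraph of size larger than $c$. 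To handle this I would pass to a \emph{nice} tree decomposition and, instead of peeling an entire subtree at once, process the bags bottom-up, adding a bag to $S$ each time the accumulated ``unfinished'' vertices strictly below the current bag first exceed $c$. This flushing mechanism simultaneously enforces the component-size cap $c$ and guarantees an amortized peel of at least $c + 1$ vertices per $k + 1$ added to $S$, which is exactly what the right-hand side of the stated inequality encodes.
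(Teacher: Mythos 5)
The paper itself does not prove this statement (it is quoted from Wood), so I am judging your argument on its own terms, and it has two genuine gaps. First, the pocket construction does not deliver what you claim. If $t^*$ is a deepest node with $|V_{t^*}\setminus B_{t^*}|\ge c+1$, minimality only bounds $|V_{t'}\setminus B_{t'}|\le c$ for each child $t'$, whereas the pieces of your pocket are the sets $V_{t'}\setminus B_{t^*}$, which additionally contain $B_{t'}\setminus B_{t^*}$ (up to $k+1$ further vertices). So even in the multi-child case a component of $G[W]$ can have up to $c+k+1$ vertices; the claimed bound $\le c$ fails. The standard repair is to take $t^*$ lowest with $|V_{t^*}|\ge c+1$ (then every component of $G[V_{t^*}\setminus B_{t^*}]$ lies inside some child set $V_{t'}$ of size $\le c$), but then $|V_{t^*}\setminus B_{t^*}|$ need not be $\ge c+1$, so your accounting (delete only $W$, keep $B_{t^*}$ in $G'$) no longer produces the required drop of $c+1$ in $n$; one has to delete all of $V_{t^*}$ and count the bag itself towards that drop. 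Your fallback ``flushing'' sketch runs into exactly the same tension: if you flush before the accumulated set exceeds $c$, each expenditure of $\le k+1$ budget retires only $c$ residue vertices, which proves a bound of the weaker type $\frac{c}{k+c+1}$ (essentially \cref{prop:lower-bound}), while if you let it reach $c+1$ the flushed chunk can be a connected set of order $c+1$. The amortization of $c+1$ vertices per $k+1$ of budget --- the entire content of Wood's stronger bound --- is therefore not established by what you wrote.

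Second, the bottom of the induction is not handled. When $p'=p-|B_{t^*}|<k+1$ the hypothesis of the statement (and hence your induction hypothesis) no longer applies, and your base case does not cover this situation: that base case needs a budget of at least $k+1$ to conclude that at most $c-1$ vertices remain, but here the residual budget can be as small as $0$ while the residual graph can have up to $k+c-1$ vertices (already at the top level, $\lfloor p/(k+1)\rfloor=1$ permits $n=k+2c$ with $p=k+1$). A connected remainder on more than $c$ vertices is then perfectly possible, e.g.\ $G'$ connected on $k+c-1$ vertices with $p'=0$, and no choice is left to you. This regime is precisely what the slack $k+c-1$ in the hypothesis encodes, and it needs a genuinely different ingredient, e.g.\ a balanced (centroid) bag: a bag $B_t$ such that every component of $G-B_t$ has at most $\tfrac12(n-|B_t|)\le c$ vertices, which you then pad to size $p$. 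Without such a step the induction cannot be closed, so the proof as proposed is incomplete in both its main mechanism and its terminal case.
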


Rearranging terms, \cref{wood} proves that
\[
    \frac{c-k}{c+1} \leq \inf\{ \frac{\alpha_c(G)}{|V(G)|} \colon \tw(G) = k\},
\]
i.e., gives the lower bound $x_{k,c} \geq \frac{c-k}{c+1}$.
As $\frac{c-k}{c+1} < \frac{c}{k+c+1}$ for all $k,c \geq 1$, the lower bound $x_{k,c} \geq \frac{c}{k+c+1}$ in~\cref{thm:main}\labelcref{enum:general-c-k} supersedes \cref{wood}.

Chappell and Pelsmajer~\cite{ChPe} investigated for $d \geq 0$ and $k \geq 1$ the largest set $S$ of vertices in any $n$-vertex graph $G$ of treewidth $k$, such that the induced subgraph $G[S]$ is a forest of maximum degree at most $d$.
Equivalently, for $d=0$, $S$ is an independent set, and for $d=1$, $S$ is a $2$-clustered set. (For $d \geq 2$ there is no equivalent correspondence in $c$-clustered sets.)

\begin{theorem}[Chappell and Pelsmajer~\cite{ChPe}]{\ \\}
    Let $G=(V,E)$ be a graph on $n$ vertices and treewidth at most $k$.
    Then there is a subset $S \subseteq V$ such that $G[S]$ has maximum degree at most $1$ and
    \[
      |S| \geq \frac{2n+2}{2k+3} \;\text{ if }k \geq 4 \qquad \text{ respectively } \qquad |S| \geq \frac{2}{k+2}n \;\text{ if }k \leq 3\text{.}
    \]
\end{theorem}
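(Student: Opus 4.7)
The plan is to prove the bound by strong induction on $n = |V(G)|$, using the well-known fact that every graph of treewidth $k$ is $k$-degenerate and therefore contains a vertex $v$ with $\deg(v) \leq k$. The base cases ($n$ up to a small constant depending on $k$) are handled by direct inspection; the additive $+2$ in the numerator of $\frac{2n+2}{2k+3}$ leaves comfortable slack at these sizes.

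For the inductive step, let $v$ have degree at most $k$. If $\deg(v) \leq 1$, one can safely append $v$ to the $2$-clustered set $S'$ returned by the inductive hypothesis applied to $G - v$: the component containing $v$ in $G[S' \cup \{v\}]$ has size at most $2$, and the required inequality follows immediately. Otherwise, pick a neighbor $u$ of $v$ and try to include the edge $\{v,u\}$ as a component of $S$, recursing on $G \setminus (N[v] \cup N[u])$. Since $|N[v] \cup N[u]| \leq 2k + 2 - |N(v) \cap N(u)|$, the trade is $2$ new vertices of $S$ against at most that many deleted ones, and whenever $v$ and $u$ share enough common neighbors (in particular when $|N[v] \cup N[u]| \leq k+2$) the ratio $\frac{2}{k+2}$ is met, which is precisely the rate needed when $k \leq 3$.

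The main obstacle is the case in which $N[v]$ and $N[u]$ are nearly disjoint for every neighbor $u$ of $v$, so that each pairing removes close to $2k+2$ vertices per $2$ added to $S$. This is exactly the regime in which the guaranteed ratio deteriorates to $\frac{2}{2k+3}$, accounting for the weaker bound $\frac{2n+2}{2k+3}$ when $k \geq 4$. To make the induction close cleanly, I would strengthen the hypothesis to an inequality of the form $|S| \geq \frac{2n + c}{2k+3}$, with an additive slack $c$ calibrated so that the bad inductive case absorbs exactly the right shortfall per recursive call, and tune the base cases so that this slack is already present. Pinning down $c$, balancing the two inductive cases against each other, and verifying that the tighter bound $\frac{2n}{k+2}$ is actually reachable for $k \leq 3$ (where the codegree constraints rule out the worst pairing configurations) is where I expect the bulk of the technical work to lie.
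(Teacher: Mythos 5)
Your plan has a genuine gap at its core step. Degeneracy gives you a vertex $v$ with $\deg(v)\leq k$, but it gives you \emph{no} bound on the degree of the neighbor $u$ you pair it with: in a graph of treewidth $k$ the neighbors of a minimum-degree vertex can have arbitrarily large degree (think of a vertex $u$ with many simplicial children in a $k$-tree; every leaf $v$ you pick has all its neighbors of unbounded degree). Hence the estimate $|N[v]\cup N[u]|\leq 2k+2-|N(v)\cap N(u)|$, on which both of your inductive cases rest, is simply false in the relevant configurations, and the recursion on $G\setminus(N[v]\cup N[u])$ can delete unboundedly many vertices per two vertices gained, so the induction does not close for either claimed bound. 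Note also that you cannot repair this by deleting only $N[v]\cup\{u\}$ (keeping the other neighbors of $u$), because then vertices of the recursive solution adjacent to $u$ would create components of size $3$ or more through $u$; controlling exactly this interaction is where the real difficulty lies. A second, independent gap is the $k\leq 3$ case: the assertion that \enquote{codegree constraints rule out the worst pairing configurations} is unsupported, and no mechanism in your sketch forces the condition $|N[v]\cup N[u]|\leq k+2$ that your rate calculation needs.

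For comparison, the bound $\frac{2}{k+2}n$ (for all $k$, not just $k\leq 3$) is established in this paper as \cref{prop:lower-bound-c2}, and the proof there is not a greedy deletion argument: it works on a $k$-tree model, decides vertices bottom-up into \enquote{taken} and \enquote{discarded} sets, and uses an amortized token scheme ($k$ tokens granted per taken vertex, $2$ tokens paid per discarded vertex) with invariants relating tokens, parents, and taken children, plus a delicate case in which two vertices are merged and the decision between them is deferred to an inductively obtained solution. The need for this amortization is precisely the phenomenon your sketch runs into: locally, a single pairing can be very lossy, and only a global charging argument over the tree model recovers the ratio $\frac{2}{k+2}$. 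So the missing \enquote{bulk of technical work} you defer is not a matter of tuning an additive slack $c$; it requires a genuinely different accounting idea.
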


They also conjecture that their (better) bound for $k\leq 3$ should hold for all $k$.

\begin{conjecture}[{Chappell and Pelsmajer~\cite[Conjecture 13]{ChPe}}]\label{conj:Chappell-Pelsmajer}{\ \\}
    For $k \geq 0$, if $G$ is a graph of order $n$ and treewidth at most $k$, then $G$ admits an induced subgraph $G[S]$ of maximum degree at most $1$ and $|S| \geq \lceil \frac{2n}{k+2} \rceil$.
\end{conjecture}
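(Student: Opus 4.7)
I would prove the equivalent lower bound $\alpha_2(G) \geq 2n/(k+2)$ for every $n$-vertex graph $G$ of treewidth at most $k$, by induction on $n$. Since $\alpha_2$ is non-increasing under edge additions on a fixed vertex set, we may assume that $G$ is edge-maximal of treewidth $k$, i.e., a $k$-tree. This assumption gives access to simplicial vertices of degree exactly $k$ whose neighborhoods are $k$-cliques, and to the clique tree of $G$ whose nodes are the $(k+1)$-cliques. The base case $n \leq k+1$ is immediate, since then $G$ is a clique with $\alpha_2(G) = \min(n,2) \geq \lceil 2n/(k+2) \rceil$.

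For the inductive step I look for a reduction of the form ``add $s$ vertices to a 2-clustered set $S$ and delete $r$ vertices from $G$'' with $s(k+2) \geq 2r$, so that the induction hypothesis applied to the remaining treewidth-$k$ graph on $n-r$ vertices yields $\alpha_2(G) \geq s + 2(n-r)/(k+2) \geq 2n/(k+2)$. Three clean cases cover most situations. First, if two simplicial vertices $u,v$ are twins, i.e., share the same $k$-clique neighborhood $K$, then they are necessarily non-adjacent in $G$, and I add $\{u,v\}$ to $S$ and delete $\{u,v\} \cup K$ for $(s,r) = (2, k+2)$. Second, if two simplicial vertices $u,v$ are adjacent, one checks that $N[u] = N[v]$ is a common $(k+1)$-clique, so I add $\{u,v\}$ to $S$ and delete $N[v]$ for $(s,r) = (2, k+1)$. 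Third, if some simplicial vertex $v$ has a neighbor $u \in N(v)$ of degree at most $k+1$ in $G$, then $N[v] \subseteq N[u]$ and $|N[v] \cup N[u]| = \deg(u) + 1 \leq k+2$, so I add $\{v,u\}$ to $S$ and delete $N[u]$.

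The main obstacle is the residual case: no twin simplicials, no adjacent simplicials, and every neighbor of every simplicial vertex of $G$ has degree at least $k+2$. Here I would rely on a more global analysis of the clique tree of $G$, aiming to batch several pairwise non-adjacent simplicial vertices $v_1, \ldots, v_s$ (with $s \geq 2$) whose pooled closed neighborhoods satisfy $|\bigcup_i N[v_i]| \leq s(k+2)/2$. Adding all of them to $S$ simultaneously (they are independent and thus form $s$ isolated components) and deleting their combined closed neighborhood then achieves the required ratio. The arithmetic forces substantial overlap between the $k$-clique neighborhoods of distinct simplicial vertices, which is consistent with the high-degree assumption on base vertices but must be extracted carefully from the clique-tree structure of a $k$-tree evading cases (a)--(c). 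Guaranteeing the existence of such a batch in every such $k$-tree is the core technical challenge, and this is where I expect the bulk of the proof to reside.
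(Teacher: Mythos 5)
Your three local reductions are correct and verified cleanly: twins give $(s,r)=(2,k+2)$, adjacent simplicials give $(s,r)=(2,k+1)$, and a low-degree neighbor of a simplicial vertex gives $(s,r)=(2,\deg(u)+1)\leq(2,k+2)$. But you explicitly stop short of the residual case, and that is a genuine gap rather than a technicality that can be waved away. The residual case is non-empty (e.g.\ for $k=2$, the $9$-edge $2$-tree on vertices $\{a,b,c,d,e,f\}$ with $d\sim a,b$; $e\sim b,c$; $f\sim a,c$ has no twins, no adjacent simplicials, and every base vertex of degree $4$), and you have not shown that a batch of pairwise non-adjacent simplicial vertices with $|\bigcup_i N[v_i]|\le s(k+2)/2$ always exists there. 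Note the arithmetic is unforgiving: for $s=2$, non-twin simplicial vertices share at most $k-1$ neighbors, forcing $|\bigcup N[v_i]|\geq k+3 > k+2$, so the batch must have $s\geq 3$ and tightly overlapping clique neighborhoods, which you would need to extract globally from the clique tree --- exactly what you flag as unresolved.

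The paper's proof does not attempt such a batching. It works directly on a rooted $k$-tree model and maintains an amortized token account: adding a vertex to $S$ earns $k$ tokens, discarding one costs $2$, with invariants on how many tokens sit at each undecided vertex relative to its parents and $S$-children. Most of its five cases are local greedy moves like yours, but the crucial final case (a lowest undecided vertex $v$ with exactly one $S$-child, whose lowest parent $w$ has no tokens, no $S$-children, and no other undecided children) cannot be settled locally at all. There the paper performs a deferred decision: it contracts $v$ and $w$, inserts a placeholder vertex, recurses, and only \emph{after} seeing the recursive solution decides whether to take $v$ and discard $w$ or vice versa. This guess-and-verify step has no analogue in a purely local simplicial-vertex reduction scheme, and I do not see how your proposed batching recovers it. You should either prove the batching lemma you need (which looks at least as hard as the original problem) or move to an amortized scheme of the paper's type that allows decisions to be postponed past the recursive call.
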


We confirm \cref{conj:Chappell-Pelsmajer} in \cref{prop:lower-bound-c2} below.
In particular, \cref{thm:main}\labelcref{enum:c1-and-c2} indeed states that $x_{k,2} \geq \frac{2}{k+2}$.

\medskip

Let us also briefly mention some further notions that are related to the $c$-clustered sets of a graph $G=(V,E)$.
Clearly, $S \subseteq V$ is $1$-clustered (i.e., an independent set) if and only if its complement $V-S$ is a vertex cover.
Along these lines, complements of $c$-clustered sets are also known as $c$-vertex separators~\cite{Lee19}, $c$-separators~\cite{BFNNPZ23}, or $c$-component order connected sets~\cite{KL17}, and for the special case of $c=2$ as $3$-path vertex covers~\cite{BKKS11}.
Meanwhile, $2$-clustered sets appeared under the name of dissociation sets~\cite{Yan81}.

%%%%%%%%%%%%%%%%%%%%%%%
%%                   %%
%%  TREEWIDTH INTRO  %%
%%                   %%
%%%%%%%%%%%%%%%%%%%%%%%
\section{Graphs of treewidth $k$ and a first observation}
\label{sec:treewidth-intro}

All graphs considered here are finite, simple, and undirected.
For a graph $G$ and a vertex $v \in V(G)$, we denote the neighborhood of $v$ in $G$ by $N_G(v) = \{u \in V(G) \colon uv \in E(G)\}$.

For our arguments it will be convenient to rely on the definition of the treewidth of a graph in terms of $k$-tree models below.
In a rooted tree $T$ with root $r$, a vertex $u$ is an \emph{ancestor} of vertex $v$ (and $v$ is a \emph{descendant} of $u$) if $u$ lies on the unique path in $T$ from $v$ to $r$.
We denote the distance between two vertices $u$ and $v$ by $\dist(u,v)$ and call vertex $u\in T$ \emph{lower} than another vertex $v \in T$ if $\dist(u,r)\geq \dist(v,r)$.
The \emph{height} of $T$ is the largest distance of any vertex in $T$ to the root plus $1$.

For $k \in \mathbb{N}$, we denote $[k] = \{1,\ldots,k\}$.

\begin{definition}
    \label{def:k-tree-model}
     A \emph{$k$-tree model} of a graph $G$ is a rooted tree $T$ with vertex set $V(T) = V(G)$, together with a labeling $L \colon V(T) \to [k+1]$ such that for every edge $uv \in E(G)$ we have
     \begin{itemize}
        \item $L(u) \neq L(v)$ and
        \item $u$ is the lowest ancestor of $v$ with label $L(u)$, or $v$ is the lowest ancestor of $u$ with label $L(v)$.
     \end{itemize}
\end{definition}

See \cref{fig:full-k-ary-tree,fig:path-on-k-and-c} for examples of graphs $G$ and some corresponding $k$-tree models $(T,L)$.
Establishing some notation, for a fixed $k$-tree model $(T,L)$ of $G$ and a vertex $v$ of $G$, the \emph{parents} of $v$ are those neighbors $u \in N_G(v)$ of $v$ in $G$ that are ancestors of $v$ in $T$.
Similarly, whenever $u$ is a parent of $v$, then $v$ is a \emph{child} of $u$.
Note that in any $k$-tree model $(T,L)$, the parents of $v$ have pairwise distinct labels and distinct from $L(v)$.
Thus, $v$ has at most $k$ parents, while $v$ may have arbitrarily many children.

While $k$-tree models are very closely related to tree decompositions, they do not appear explicitly in the literature.
A \emph{tree decomposition} of a graph $G$ is a tree $T$, together with a set $X_t \subseteq V(G)$ for each $t \in V(T)$, such that 
\begin{itemize}
    \item for every edge $uv \in E(G)$ there exists a $t \in V(T)$ with $\{u,v\} \subseteq X_t$, and
    \item for every vertex $v \in V(G)$, the set $\{t \in V(T) \colon v \in X_t\}$ induces a subtree of $T$.
\end{itemize}
The \emph{treewidth} of $G$, denoted by $\tw(G)$, is the smallest $k \in \mathbb{N}$ for which $G$ admits a tree decomposition $(T,\{X_t\}_{t \in V(T)})$ of \emph{width} $k+1$, meaning that $|X_t| \leq k+1$ for each $t \in V(T)$.

\begin{lemma}
    The treewidth $\tw(G)$ of $G$ is the smallest $k \in \mathbb{N}$ for which there exists a $k$-tree model $(T,L)$ of $G$.
\end{lemma}
\begin{proof}
    It is well-known that if $\tw(G) = k$, then $G$ admits a tree decomposition $(T,\{X_t\}_{t \in V(T)})$ of width $k+1$ with the following additional properties (called a \emph{clean tree decomposition}).
    \begin{itemize}
        \item Tree $T$ is a rooted tree, which determines for each vertex $v \in V(G)$ a \emph{root} $r(v)$ of its corresponding subtree, namely $r(v) = \mathrm{argmin}\{\dist(t,r) \colon t \in V(T), v \in X_t\}$.
        \item Every $t \in V(T)$ is the root of exactly one vertex $v \in V(G)$.
    \end{itemize}
    We fix such a clean tree decomposition $(T,\{X_t\}_{t \in V(T)})$ of width $k+1$.
    
    Another classic fact is that $G$ admits a proper vertex coloring $\phi\colon V(G) \to [k+1]$ with $k+1$ colors, such that whenever $\phi(u) = \phi(v)$ for distinct vertices $u,v \in V(G)$, then there is \emph{no} $t \in V(T)$ with $\{u,v\} \subseteq X_t$.
    Now, we define the labeling $L \colon V(T) \to [k+1]$ as $L(t) = \phi(r^{-1}(t))$, where $r^{-1}(t)$ is the vertex $v \in V(G)$ with $r(v) = t$.
    Renaming each $t$ to $r^{-1}(t)$ gives $V(T) = V(G)$, and together with the labeling $L$, this is a $k$-tree model of $G$.
    
    \medskip
    
    Conversely, assume that $(T,L)$ is a $k$-tree model of $G$.
    Then we obtain a (clean) tree decomposition $(T,\{X_t\}_{t \in V(T)})$ of width $k+1$ of $G$ by setting $X_t = \{t\} \cup \{v \in V(G) \colon v \text{ is a parent of } t\}$ for each $t \in V(T)$.
\end{proof}

As mentioned above, it is convenient for us to phrase our constructions and proofs in terms of $k$-tree models.
As the $c$-clustered independence number $\alpha_c(G)$ is monotone under taking subgraphs, we may always assume that $G$ is \emph{edge-maximal} with respect to its given $k$-tree model $(T,L)$.
That is, two vertices $u,v \in V(G)$ form an edge in $G$ if and only if $u$ is a parent of $v$ or $v$ is a parent of $u$.
We remark that, while $G$ might have several $k$-tree models (and might even be edge-maximal with respect to only some of these), every $k$-tree model $(T,L)$ has exactly one corresponding edge-maximal graph $G$.

\smallskip

Let us come back to the $c$-clustered chromatic number $\chi_c(G)$, the $c$-clustered independence number $\alpha_c(G)$, and \cref{eq:trivial-LB} saying that $\alpha_c(G)/|V(G)| \geq 1 / \chi_c(G)$ for every graph $G$.
If $(T,L)$ is a $k$-tree model of $G$, then in particular $L$ is a proper vertex coloring of $G$ with $k+1$ colors.
Hence, $\chi_1(G) = \chi(G) \leq \tw(G)+1$ for every graph $G$, which implies with \cref{eq:trivial-LB} that
\[
    \frac{\alpha_1(G)}{|V(G)|} \geq \frac{1}{k+1} \text{ if } \tw(G) = k \quad \text{ and thus } \quad x_{k,1} \geq \frac{1}{k+1}.
\]
It is easy to see that in fact $x_{k,1} = \frac{1}{k+1}$, cf.~\cref{thm:main}\labelcref{enum:c1-and-c2}.
However, \cref{eq:trivial-LB} does not give anything better for $c > 1$, due to the following.

\begin{observation}\label{obs:c-clustered-coloring}
    For any $k,c$, let $T$ be the full $c$-ary tree with root $r$ and height $k+1$, and $L \colon V(T) \to [k+1]$ the labeling given by $L(v) = \dist(v,r) + 1$.
    See \cref{fig:full-k-ary-tree} for an example.
    Then the edge-maximal graph $G$ with $k$-tree model $(T,L)$ satisfies $\chi_c(G) = k+1$.

    \begin{figure}
        \centering
        \includegraphics{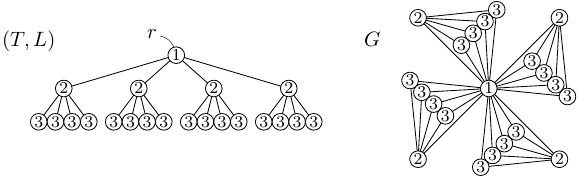}
        \caption{
            A graph $G$ (right) with a $k$-tree model $(T,L)$ (left) with $\chi_c(G) = k+1$ for $c = 4$, $k = 2$.
        }
        \label{fig:full-k-ary-tree}
    \end{figure}

    In fact, if $\phi$ is any $c$-clustered coloring of $G$ and the root $r$ receives color $i$, then at least one of the $c$ subtrees below $r$ contains no vertex of color $i$, and it follows by induction on $k$ that there are at least $k+1$ distinct colors.
\end{observation}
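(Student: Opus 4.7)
The plan is to show $\chi_c(G) = k+1$ by establishing the two bounds separately. The upper bound $\chi_c(G) \leq k+1$ is immediate: by definition of a $k$-tree model, $L \colon V(G) \to [k+1]$ is already a proper coloring of $G$, so each color class is an independent set and hence trivially $c$-clustered.

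For the lower bound, I would first unpack what edge-maximality gives in this particular setting. Because the labels $L(v) = \dist(v,r)+1$ strictly increase along every root-to-leaf path in $T$, for any ancestor $u$ of a vertex $v$ the label $L(u)$ is unique among the ancestors of $v$, so $u$ is automatically the lowest such ancestor. Consequently the edge-maximal graph $G$ contains an edge for every ancestor-descendant pair of $T$; in particular, the root $r$ is adjacent in $G$ to every other vertex, and each root-to-leaf path induces a copy of $K_{k+1}$.

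The lower bound $\chi_c(G) \geq k+1$ then proceeds by induction on $k$. The base case $k=0$ is trivial as $G$ is a single vertex. For the inductive step, let $\phi$ be an arbitrary $c$-clustered coloring of $G$, set $i = \phi(r)$, and let $T_1,\dots,T_c$ be the $c$ subtrees of $T$ rooted at the children of $r$. Since $r$ is adjacent in $G$ to every other vertex of $\phi^{-1}(i)$, the whole color class $\phi^{-1}(i)$ sits in a single connected component of $G[\phi^{-1}(i)]$, and the $c$-clustered hypothesis forces $|\phi^{-1}(i)| \leq c$. Together with $r$ itself using color $i$, this implies that at least one subtree $T_j$ contains no vertex of color $i$. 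The induced subgraph $G[V(T_j)]$ is precisely the edge-maximal graph for the $(k-1)$-tree model obtained by restricting $L$ to $V(T_j)$ and shifting labels down by one, because ancestor-descendant relations in $T$ restrict faithfully to $T_j$. By the induction hypothesis, $\phi$ restricted to $V(T_j)$ uses at least $k$ distinct colors, none equal to $i$, giving at least $k+1$ colors in total.

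The only delicate point I anticipate is the reduction to the subtree $T_j$: one must check that $G[V(T_j)]$ really is the edge-maximal graph of the restricted model, rather than a strict subgraph, and that the edges of $G$ from $V(T_j)$ to $V(T) \setminus V(T_j)$ cause no trouble. Both follow directly from the ancestor-descendant characterization of the edges of $G$ established in the second paragraph, so the argument should go through cleanly.
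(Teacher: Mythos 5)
Your proof is correct and follows the same strategy as the paper's terse sketch: the upper bound via $L$ being a proper coloring, and the lower bound by noting that the root $r$ is universal in $G$ (because labels strictly increase along root-to-leaf paths, so every ancestor--descendant pair is an edge), deducing that at least one of the $c$ subtrees below $r$ avoids the color $\phi(r)$, and inducting on $k$ applied to that subtree. You have correctly filled in the details the paper leaves implicit, in particular the verification that $G$ restricted to a child subtree is again the edge-maximal graph of the corresponding $(k-1)$-tree model.
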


\cref{obs:c-clustered-coloring} shows that for $c \neq 1$ we do not get anything better than the obvious lower bound $x_{k,c} \geq x_{k,1} \geq \frac{1}{k+1}$ from considering $c$-clustered chromatic numbers and the simple averaging argument in~\cref{eq:trivial-LB}.

%%%%%%%%%%%%%%%%%
%%             %%
%%  general k  %%
%%  general c  %%
%%             %%
%%%%%%%%%%%%%%%%%
\section{General bounds for all $k$ and $c$}
\label{sec:general-case}

Here we consider the case of any integers $k,c \geq 1$, i.e., we prove \cref{thm:main}\labelcref{enum:general-c-k}, starting with the lower bound.

\begin{proposition}\label{prop:lower-bound}
    For every $k,c \geq 1$ and every graph $G$ of treewidth $k$ we have $\alpha_c(G) \geq \frac{c}{k+c+1}|V(G)|$, i.e., $x_{k,c} \geq \frac{c}{k+c+1}$.
\end{proposition}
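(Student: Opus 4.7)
My plan is an averaging argument over a suitable partition of $V(G)$ coming from a $k$-tree model $(T,L)$ of $G$, rooted at some $r$, with depth function $d$. Partition $V(G)$ into $c+k+1$ classes by $P_i = \{v : d(v) \equiv i \pmod{c+k+1}\}$ for $i \in \{0, \ldots, c+k\}$. For each shift $s$ let $S_s = V(G) \setminus (P_s \cup P_{s+1} \cup \cdots \cup P_{s+k})$ (indices cyclic), so $S_s$ omits $k+1$ consecutive classes. Since $\sum_{s=0}^{c+k} |S_s| = c \cdot |V(G)|$, some shift yields $|S_s| \geq \frac{c}{c+k+1} |V(G)|$, and the task reduces to showing that for such an $s$ the set $S_s$ is a $c$-clustered set.

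The crucial structural step is a ``locality'' property of $G$-edges. After replacing $(T,L)$ if necessary by a \emph{saturated} $k$-tree model---one in which the labels of any $k+1$ consecutive vertices along any $T$-path are pairwise distinct---I claim every edge $uv \in E(G)$ with $u$ an ancestor of $v$ in $T$ satisfies $d(v)-d(u) \leq k$. Indeed, if $d(v)-d(u) \geq k+1$, then by the ``lowest $L(u)$-ancestor'' condition no vertex on the $T$-path from $u$'s child to $v$ has label $L(u)$, yet the saturated property applied to any window of $k+1$ consecutive vertices on that path forces some vertex to have label $L(u)$, a contradiction. Hence no $G$-edge crosses the band of $k+1$ deleted depth classes, so every connected component of $G[S_s]$ lies entirely within one ``block'' of $c$ consecutive retained levels. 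The existence of a saturated tree model for every graph of treewidth $k$ needs to be checked separately, most cleanly by building $(T,L)$ top-down from a nice tree decomposition and assigning each new vertex the label in $[k+1]$ missing from its bag of $\le k$ predecessors.

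The main obstacle I foresee is to then bound by $c$ the size of a within-block component: a ``top'' vertex $v^\ast$ of a block may have many $T$-children at the next depth to which it is $G$-connected, producing a large star inside the block. To close this gap I would refine the partition so that $\phi(v)$ depends on both $d(v)$ and $L(v)$ (for instance, shift the modular depth class by a label-dependent amount, so that vertices at a common depth are spread across multiple classes, splitting such stars). Alternatively---and this is the route I would fall back to if a clean refinement does not work---I would switch to an inductive peeling argument on $|V(G)|$: iteratively identify a group of $c+k+1$ vertices near a leaf of $T$ where one can safely add $c$ vertices to the $c$-clustered set and discard $k+1$, recurse on the rest, and verify the ratio by a charging scheme in which each discarded vertex is charged to $c$ kept vertices and each kept vertex receives at most $k+1$ charges---this exactly certifies $|V(G) \setminus S| \le \tfrac{k+1}{c+k+1}|V(G)|$, i.e.\ the bound $\alpha_c(G) \geq \tfrac{c}{c+k+1}|V(G)|$.
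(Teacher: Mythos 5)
There is a genuine gap, and you have in fact identified it yourself: the depth-class averaging only controls the \emph{vertical} extent of a component of $G[S_s]$, not its size. Even granting the locality claim (which, incidentally, is off by one: a saturated window of $k+1$ vertices strictly above $v$ only rules out depth difference $\geq k+2$, and the existence of a saturated model is itself unverified -- the natural model of a $2$-tree already violates it), a single retained vertex may have arbitrarily many $G$-children at the next depth, so a component confined to $c$ consecutive retained levels can be a star of unbounded size. The label-dependent shift cannot repair this: all children of such a vertex may share one label and one depth, and however you redistribute them among classes, the star stays connected as long as its center is retained; what is needed is to \emph{delete} the center, which no fixed depth-and-label partition guarantees for every shift. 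So the first route, as proposed, does not yield a $c$-clustered set.

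Your fallback is the right idea -- it is essentially how the paper argues -- but as written it only restates the accounting ($\geq c$ taken per $\leq k+1$ discarded) without the one ingredient that makes the induction go through: \emph{which} group to peel. The paper takes a lowest vertex $v$ of the tree model having at least $c$ descendants. Then every child subtree of $v$ has at most $c$ vertices, and since $G$-edges only join ancestor--descendant pairs, the set $A$ of all descendants of $v$ (of size $\geq c$, possibly much larger) is automatically $c$-clustered; moreover deleting $v$ together with $N_G(v)\setminus A$ (at most $k$ further vertices, the parents of $v$) separates $A$ from the rest, so one may recurse on $G-(A\cup N_G(v)\cup\{v\})$ and simply add $A$ to the set produced by induction. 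Note also that the peeled group is not ``$c+k+1$ vertices'': one takes \emph{all} of $A$ and discards at most $k+1$ vertices, and the inequality $|A|\geq c$ is exactly what makes the ratio $\frac{c}{k+c+1}$ survive the induction step. Without specifying this choice of $v$ and verifying that $A$ is $c$-clustered and isolated from the remainder, the charging scheme has nothing to charge against, so the proposal as it stands does not constitute a proof.
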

\begin{proof}
    Fix $G = (V,E)$ to be any graph of treewidth $k$.
     We proceed by induction on $n = |V|$ and find a $c$-clustered set $S$ in $G$ of size $|S| \geq \frac{c}{k+c+1}n$.
     For the induction base, the case $n \leq c$, it is enough to take $S = V$.
     So assume that $n > c$.
     Let $(T,L)$ be a $k$-tree model of $G$, and assume without loss of generality that $G$ is edge-maximal with respect to $(T,L)$.
     Let $v$ be a lowest vertex in $T$ that has at least $c$ descendants.
     Let $A$ be the set of descendants of $v$ and $B = N_G(v) - A$.
     Then $|A| \geq c$, $A$ is a $c$-clustered set in $G$ (by the minimality in the choice of $v$), and $|B| \leq k$.
     
     Further, we claim that no vertex in $A$ is adjacent to any vertex in $G' = G- (A \cup B \cup \{v\})$.
     In fact, if $u \in A$ is adjacent to $w \notin A \cup \{v\}$, then $L(u) \neq L(w)$ and $w$ is an ancestor of $v$ in $T$.
     If $L(w) \neq L(v)$, then also $v$ is adjacent to $w$ by the edge-maximality of $G$, i.e., $w \in B$, as desired.
     On the other hand, $L(w) = L(v)$ would contradict the fact that $w$ is the lowest ancestor of $u$ with that label.
    
     By induction on $G'$, there is a $c$-clustered set $S'$ of at least $\frac{c}{k+c+1}(n - |A| - |B|-1)$ vertices in $G'$.
     Then $S = S' \cup A$ is a $c$-clustered set of size at least:
     \begin{center}    
         \begin{tabular}{cl}
             ~&$\frac{c}{k+c+1}(n - |A| - |B|-1) + |A|$ \\
             $=$&$\frac{c}{k+c+1}n + \frac{k+1}{k+c+1}|A| - \frac{c}{k+c+1}(|B|+1)$ \\
             $\geq$&$\frac{c}{k+c+1}n + \frac{(k+1)c}{k+c+1} - \frac{c(k+1)}{k+c+1}$ \\
             $=$& $\frac{c}{k+c+1}n.$
         \end{tabular}
     \end{center}
\end{proof}

The upper bound on $x_{k,c}$ is a simple construction.

\begin{observation}\label{obs:upper-bound}
    For any $k,c$, let $T = [v_1,\ldots,v_{k+c}]$ be a path on $k+c$ vertices rooted at $v_1$, and $L\colon V(T) \to [k+1]$ the labeling given by $L(v_i) = i$ for $i = 1,\ldots,k$ and $L(v_i) = k+1$ for $i = k+1,\ldots,k+c$.
    See \cref{fig:path-on-k-and-c} for an example.
    Then the edge-maximal graph $G$ with $k$-tree model $(T,L)$ satisfies $\alpha_c(G) = c$ and $|V(G)| = k+c$.

    \begin{figure}
        \centering
        \includegraphics{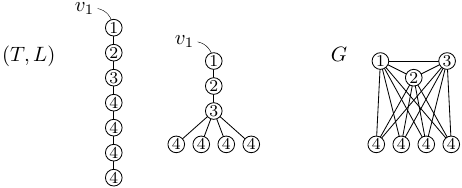}
        \caption{A graph $G$ (left) with two different $k$-tree models $(T,L)$ (left) with $\alpha_c(G) = c$ and $|V(G)| = k+c$, i.e., $\alpha_c(G) = \frac{c}{k+c}|V(G)|$, for $k=3$, $c=4$.}
        \label{fig:path-on-k-and-c}
    \end{figure}

    In fact, $v_1,\ldots,v_k$ are universal vertices in $G$ and hence any set of $c+1$ vertices in $G$ induces a connected subgraph of size $c+1$, i.e., is not $c$-clustered.
\end{observation}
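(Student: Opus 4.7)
The plan is to establish the observation in three short steps: verify the vertex count, argue that $v_1,\ldots,v_k$ are universal in the edge-maximal graph $G$, and then conclude $\alpha_c(G)=c$ via a pigeonhole argument on any $(c+1)$-subset.

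The first step is immediate, since $V(G)=V(T)$ and $|V(T)|=k+c$ by construction. For the second step, I would check each candidate edge $v_iv_j$ with $i\leq k$ and $i<j$ against the two conditions of a $k$-tree model. The label condition is satisfied because $v_1,\ldots,v_k$ carry pairwise distinct labels $1,\ldots,k$, all different from $k+1$. For the ancestor condition, note that since $T$ is the path $v_1,\ldots,v_{k+c}$ rooted at $v_1$, the set of ancestors of $v_j$ is exactly $\{v_1,\ldots,v_{j-1}\}$, and among these, $v_i$ is the unique vertex with label $i$. Hence $v_i$ is trivially the lowest ancestor of $v_j$ with label $L(v_i)$. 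By edge-maximality, $v_iv_j\in E(G)$, so each $v_i$ with $i\leq k$ is adjacent to every other vertex of $G$.

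For the third step, the set $\{v_{k+1},\ldots,v_{k+c}\}$ is a $c$-clustered set of size $c$: its vertices all share the label $k+1$, so the first $k$-tree-model condition forbids any edge between them, making this set independent in $G$. Thus $\alpha_c(G)\geq c$. Conversely, any set $S\subseteq V(G)$ with $|S|=c+1$ must intersect $\{v_1,\ldots,v_k\}$ by pigeonhole, since the complementary set $\{v_{k+1},\ldots,v_{k+c}\}$ has only $c$ vertices. Picking such a vertex $v_i\in S\cap\{v_1,\ldots,v_k\}$, the universality of $v_i$ forces $G[S]$ to be connected on $c+1>c$ vertices, so $S$ is not $c$-clustered. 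Combined with the lower bound, this yields $\alpha_c(G)=c$.

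There is no genuine obstacle here; the whole argument is a direct verification. The one subtlety worth being careful about is the two-sided role of the $k$-tree-model definition: we need it both to confirm that all desired edges between the $v_i$ ($i\leq k$) and the remaining vertices are permitted (giving universality), and to confirm that no edges among $\{v_{k+1},\ldots,v_{k+c}\}$ appear in $G$ even under edge-maximality (giving the independent set of size $c$). Both points reduce to the label-distinctness condition and the observation that each label $i\leq k$ occurs exactly once along the path.
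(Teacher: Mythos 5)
Your proposal is correct and follows essentially the same route as the paper: universality of $v_1,\ldots,v_k$ (which you verify carefully against the $k$-tree-model conditions) plus the pigeonhole observation that any $(c+1)$-subset contains one of these universal vertices and hence induces a connected subgraph. The only superfluous step is the independence argument for $\alpha_c(G)\geq c$: any set of $c$ vertices is automatically $c$-clustered, so the lower bound needs no appeal to the labels.
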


Taking arbitrary vertex-disjoint unions of the graph $G$ in \cref{obs:upper-bound}, it follows that $x_{k,c} \leq \frac{c}{k+c}$.
So \cref{prop:lower-bound} and \cref{obs:upper-bound} together prove \cref{thm:main}\labelcref{enum:general-c-k}.

%%%%%%%%%%%%%%%%%
%%             %%
%%  general c  %%
%%    k = 1    %%
%%             %%
%%%%%%%%%%%%%%%%%
\section{The case of graphs of treewidth $1$}
\label{sec:k1-case}

We shall prove \cref{thm:main}\labelcref{enum:k1}.
For this we shall argue that, for the case of treewidth $k=1$, we can improve the lower bound in \cref{prop:lower-bound} by just slightly changing its proof.
To this end, we remark that any graph $G$ with at least one edge admits a $k$-tree model $(T,L)$ with $k = \tw(G)$ in which the two endpoints of any edge in $T$ have distinct labels in $L$.
In fact, if $v$ is the immediate ancestor of $u$ in $T$ and $L(u) = L(v)$, we can change the tree (keeping all labels) by hanging the subtree $T_u$ rooted at $u$ under the lowest ancestor $w$ of $u$ with $L(w) \neq L(u)$.
If there is no such ancestor, then the vertices in $T_u$ are not connected to the remaining graph, and we can permute the labeling in $T_u$ to give $u$ a label distinct from $L(v)$.
In either case, the result is still a $k$-tree model of $G$ with fewer problematic edges.

\begin{proposition}\label{prop:lower-bound-k1}
    For every $c \geq 1$ and every graph $G$ of treewidth $1$ we have $\alpha_c(G) \geq \frac{c}{1+c}|V(G)|$, i.e., $x_{1,c} \geq \frac{c}{1+c}$.
\end{proposition}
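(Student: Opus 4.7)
The plan is to re-run the induction from the proof of \cref{prop:lower-bound}, but take advantage of the strengthened form of the $1$-tree model noted just before the statement, in which the endpoints of each edge of $T$ carry distinct labels. Since $L$ uses only two labels, this forces labels to alternate along every root-to-leaf path, and the only $G$-edges between ancestor--descendant pairs in $T$ then turn out to be the edges of $T$ themselves; in particular, if $v$ has a parent $p$ in $T$, then $p$ has no $G$-neighbor among the strict descendants of $v$.

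The base case $n \leq c$ is handled by $S = V$. For $n > c$, let $v$ be a lowest vertex of $T$ with at least $c$ descendants and $A$ be its set of descendants, so that $|A| \geq c$ and $A$ is $c$-clustered exactly as in the proof of \cref{prop:lower-bound}. The new ingredient is that, by the observation above, $A$ has no $G$-neighbor in $V(G) \setminus (A \cup \{v\})$, so we do not need to delete the set $B = N_G(v) - A$ when recursing. Applying the inductive hypothesis to $G' := G - (A \cup \{v\})$ yields a $c$-clustered set $S''$ of $G'$ of size at least $\frac{c}{c+1}(n - |A| - 1)$, and $S := A \cup S''$ is a $c$-clustered set of $G$ of size at least
\[
    |A| + \frac{c}{c+1}\bigl(n - |A| - 1\bigr) \;=\; \frac{c}{c+1}\,n + \frac{|A| - c}{c+1} \;\geq\; \frac{c}{c+1}\,n,
\]
using $|A| \geq c$.

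The sole departure from the proof of \cref{prop:lower-bound} is that in the recursion we remove only $A \cup \{v\}$ rather than $A \cup B \cup \{v\}$, and the main obstacle is therefore to justify that $v$'s parent $p$ in $T$ (when $v$ is not the root) has no $G$-edge into $A$. This will follow from the alternating labels: for any descendant $w$ of $v$, the path from $p$ to $w$ in $T$ has length $d \geq 2$ with labels alternating between $L(p)$ and its complement; if $d = 2$ then $L(w) = L(p)$ and the edge-label condition $L(p) \neq L(w)$ fails, while if $d \geq 3$ then $p$'s grandchild on this path already has label $L(p)$ and is a strictly lower ancestor of $w$ than $p$, so $p$ is not the lowest $L(p)$-labeled ancestor of $w$. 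The same type of argument rules out $G$-edges from $A$ to any ancestor of $v$ strictly above $p$.
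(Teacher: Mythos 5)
Your proposal is correct and follows essentially the same route as the paper's proof: the same induction with the same lowest vertex $v$ and descendant set $A$, removing only $A \cup \{v\}$, with the identical arithmetic. The only cosmetic difference is how the key claim (no edges from $A$ to $G - (A\cup\{v\})$) is justified: you observe that with alternating labels every ancestor--descendant edge of $G$ is a $T$-edge, while the paper runs a short case analysis on $L(w)$ -- these amount to the same argument.
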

\begin{proof}
     We proceed by induction on $n = |V|$ and find a $c$-clustered set $S$ in $G$ of size $|S| \geq \frac{c}{1+c}n$.
     For the induction base, the case $n \leq c$, it is enough to take $S = V$.
     So assume that $n > c$.
     Let $(T,L)$ be a $1$-tree model of $G$ with the property that any edge of $T$ connects two vertices of distinct label in $L$.
     As in the proof of \cref{prop:lower-bound}, let $v$ be a lowest vertex in $T$ with a set $A$ of at least $c$ descendants.
     Then $A$ is a $c$-clustered set of size $|A| \geq c$, and no vertex in $A$ is adjacent to any vertex in $G' = G - (A \cup \{v\})$.
     In fact take any $w \in A$ and assume by symmetry that $L(v) = 1$.
     If $L(w) = 2$, then $v$ is the only ancestor of $w$ in $N_G(w)$.
     And if $L(w) = 1$, the ancestor of $w$ in $N_G(w)$ has label $2$ and is the immediate ancestor of $w$ in $T$ by our additional assumption on the $1$-tree model.
    
     Now by induction on $G'$, there is a $c$-clustered set $S'$ of at least $\frac{c}{1+c}(n-|A|-1)$ vertices in $G'$.
     Then $S = S' \cup A$ is a $c$-clustered set of size at least:
     \begin{center}
         \begin{tabular}{cl}
              ~ & $\frac{c}{1+c}(n-|A|-1) + |A|$ \\
              $=$ & $\frac{c}{1+c}n + \frac{1}{1+c}|A| - \frac{c}{1+c}$\\
              $\geq$ & $\frac{c}{1+c}n + \frac{c}{1+c} - \frac{c}{1+c}$\\
              $=$ & $\frac{c}{1+c}n.$
         \end{tabular}
     \end{center}
\end{proof}

%%%%%%%%%%%%%%%%%
%%             %%
%%    c = 2    %%
%%  general k  %%
%%             %%
%%%%%%%%%%%%%%%%%
\section{The case of $1$-clustered and $2$-clustered sets}
\label{sec:c2-case}

We shall prove \cref{thm:main}\labelcref{enum:c1-and-c2}.
In fact, we already have the upper bound $x_{k,c} \leq \frac{c}{k+c}$ and need to prove a matching lower bound when $c \leq 2$.
For $c=1$, this is already given by~\cref{eq:trivial-LB} and it remains to consider the case $c=2$ here.

\begin{proposition}\label{prop:lower-bound-c2}
    For every $k \geq 1$ and every graph $G$ of treewidth $k$ we have $\alpha_2(G) \geq \frac{2}{k+2}|V(G)|$, i.e., $x_{k,2} \geq \frac{2}{k+2}$.
\end{proposition}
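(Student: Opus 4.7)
The plan is to proceed by induction on $n=|V(G)|$, with the trivial base case $n\leq 2$ (take $S=V$). The case $k=1$ follows directly from Proposition~\ref{prop:lower-bound-k1} with $c=2$, since $\tfrac{c}{1+c}=\tfrac{2}{3}=\tfrac{2}{k+2}$; so I assume $k\geq 2$ and fix an improved $k$-tree model $(T,L)$ of $G$ in which the two endpoints of every edge in $T$ carry distinct labels (via the rearrangement described before Proposition~\ref{prop:lower-bound-k1}).

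Following the template of Proposition~\ref{prop:lower-bound}, I would pick a lowest vertex $v$ in $T$ having at least two descendants, set $A$ to be the descendants of $v$ and $B=N_G(v)\setminus A$, and observe exactly as there that $A$ is a 2-clustered set in $G$ (each component of $G[A]$ is a $T$-subtree of size at most $2$), $|A|\geq 2$, $|B|\leq k$, and $N_G(A)\setminus(A\cup\{v\})\subseteq B$. Combining $A$ with a 2-clustered subset of $G-(A\cup\{v\}\cup B)$ obtained inductively yields
\[
|S|\;\geq\; |A|+\tfrac{2}{k+2}\bigl(n-|A|-|B|-1\bigr)\;=\;\tfrac{2n}{k+2}+\tfrac{k|A|-2(|B|+1)}{k+2},
\]
so the induction closes whenever $k|A|\geq 2(|B|+1)$. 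Using $k\geq 2$, this already handles both $|A|\geq 3$ with any $|B|\leq k$ (since $3k\geq 2k+2$) and $|A|=2$ with $|B|\leq k-1$ (since $2k\geq 2k$).

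The only remaining configuration to handle separately is $|A|=2$ with $|B|=k$. Here the $T$-subtree of $v$ consists of exactly three vertices---either a chain $v\text{-}u\text{-}w$ or a cherry in which $v$ has two leaf children $w_1,w_2$---and $v$ has a $G$-parent of every label in $[k+1]\setminus\{L(v)\}$. In the cherry sub-case, the label $L(w_1)$ lies in $[k+1]\setminus\{L(v)\}$, so there is some $p\in B$ with $L(p)=L(w_1)$, and the triple $S_0=\{w_1,w_2,p\}$ is 2-clustered in $G$ by a short case-check: $w_1\not\sim w_2$ (neither is a $T$-ancestor of the other), $w_1\not\sim p$ (equal labels), and $\{w_2,p\}$ is at worst a single edge, so $G[S_0]$ has maximum degree one. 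A symmetric argument works in the chain sub-case using the analogous label coincidence between $w$ and a suitable $p\in B$. Removing the local bag $\{v\}\cup A\cup B$ of size exactly $k+3$ gains three elements for $S$ at a cost of $k+3$ deleted vertices, and the identity $\tfrac{3}{k+3}\geq\tfrac{2}{k+2}$ (valid for all $k\geq 0$) is exactly the arithmetic needed to close the inductive step.

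The hard part is making sure $S_0$ actually remains a 2-clustered ``block'' of $G[S]$ after combining with the recursively produced set $S'$ on $G-(\{v\}\cup A\cup B)$: the vertex $p$, being internal in $T$, a priori has further $G$-neighbors \emph{outside} the local bag, and any such neighbor included in $S'$ would enlarge $p$'s component in $G[S]$ beyond size two. I would address this either by a careful choice of $p$ among the $k$ available parents of $v$ (so that its external $G$-neighbors can be absorbed at no extra cost), or by a mild strengthening of the inductive hypothesis allowing one ``capped'' vertex whose $S$-degree is forced to be at most one; this bookkeeping is the delicate step that distinguishes Proposition~\ref{prop:lower-bound-c2} from the cleaner argument of Proposition~\ref{prop:lower-bound}.
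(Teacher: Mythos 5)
Your proposal is incomplete: you correctly reduce to the configuration $|A|=2$, $|B|=k$, and you correctly diagnose the obstacle there, but you do not overcome it, and the obstacle is not a bookkeeping detail. Once the parent $p$ is placed in $S$, it is a vertex that has been deleted from the recursive instance $G-(\{v\}\cup A\cup B)$ yet remains adjacent in $G$ to arbitrarily many vertices outside that bag; the recursion gives you no control over which of those end up in $S'$, so $p$'s component in $G[S'\cup S_0]$ can grow without bound. Choosing a ``better'' $p$ among the $k$ parents does not fix this, since each parent of $v$ has its own uncontrolled external neighbourhood, and the ``capped vertex'' strengthening you float is precisely the missing lemma: it would require a new inductive statement with its own proof and case analysis, which you have not supplied. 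There is also a smaller slip in the chain sub-case: you ask for $p\in B$ with $L(p)=L(w)$, but $w$ sits two $T$-levels below $v$, so $L(w)=L(v)$ is possible, and then no such $p$ exists in $B$ (all labels in $B$ differ from $L(v)$). That one is repairable — take $L(p)=L(u)$ instead, so $p\not\sim u$ by equal labels and $p\not\sim w$ because $u$ is a lower ancestor of $w$ carrying label $L(u)$ — but the main gap remains.

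For comparison, the paper does not argue by local bag removal at all. It runs a one-vertex-at-a-time take/discard process over a $k$-tree model, maintains token invariants at the undecided vertices, and closes the bound by the amortized accounting $k|S|\geq 2|D|$. The configuration that blocks your sketch (a lowest undecided vertex with exactly one $S$-child and a tight parent set) is handled there by contracting $v$ with its lowest parent $w$ into a single auxiliary vertex, recursing on the smaller instance, and only afterwards deciding which of $v,w$ to take, depending on whether the recursive solution $S'$ meets the parents of $v$ — a ``decide after the recursion'' device that supplies exactly the control over $p$'s external neighbourhood that your argument lacks. Your approach might be salvageable by importing such a strengthened invariant, but as written it is not a proof.
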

\begin{proof}
     Let $G = (V,E)$ be any graph of treewidth $k$.
     We proceed by induction on $n = |V|$ and find a $2$-clustered set $S$ in $G$ of size $|S| \geq \frac{2}{k+2}n$.
     For the induction base, the case $n \leq k+2$, it is enough to let $S$ be any pair of vertices.
     So assume that $n \geq k+3$ and let $(T,L)$ be a $k$-tree model of $G$.
     Without loss of generality assume that $G$ is edge-maximal with model $(T,L)$.
     In particular, the parents of each vertex form a clique. 
    
     We describe a procedure that gradually determines which vertices of $G$ to \emph{take}, i.e., include them in the desired set $S$, and which to \emph{discard}, i.e., include them in another set $D$.
     All such decisions will be irrevocable, and eventually $S$ and $D$ will partition $V$.
     During the course of the procedure, vertices in $V - (S \cup D)$ are called \emph{undecided}.
     In order to ensure that $S$ is a $2$-clustered set we maintain the following invariant for every vertex~$v$.
    
     \begin{enumerate}[{I}1]
         \item If $v \in S$, then $v$ has no undecided children and at most one child in $S$. If $v$ has a child in $S$, then all parents of $v$ are in $D$. \label{inv:2-clustered}
     \end{enumerate}
    
     Note that this indeed ensures $S$ to be a $2$-clustered set.
     In order to bound the size $|S|$ of $S$ in terms of $n$ the number of vertices in $G$, we use \emph{tokens} placed on vertices. Initially, there are no tokens.
     Taking an undecided vertex $v$ grants $k$ tokens, which we can distribute on the remaining undecided vertices $V-(S \cup D \cup \{v\})$.
     Discarding an undecided vertex $v$ costs $2$ tokens, which we can remove from $v$ or other undecided vertices.
     Thus, as soon as there are no more undecided vertices, i.e., $V = S \cup D$, we can conclude that
     \[
        k|S| \geq \#\text{tokens granted by taking vertices} \geq \#\text{tokens spend by discarding vertices} \geq 2|D|
     \]
     and thus
     \[
        \frac{k}{2} |S| \geq |D| = n-|S| \quad \Rightarrow \quad \frac{k+2}{2}|S| \geq n \quad \Rightarrow \quad |S| \geq \frac{2}{k+2}n,
     \]
     as desired.
     
     At intermediate states we allow a negative token count at undecided vertices.
     But still, in order to discard a vertex $v$, we first must redistribute tokens so that the token count at $v$ is at least $2$, and in order to take vertex $v$ the token count at $v$ must be at least $0$.
    
     Initially $S = D = \emptyset$, i.e., all vertices of $G$ are undecided.
     Throughout we maintain a $k$-tree model $(T,L)$ for $G' = G - D$, i.e., the induced subgraph of $G$ on all taken and undecided vertices.
     By discarding a vertex $v$, we remove $v$ from the current $k$-tree model of $G'$ by contracting $v$ into its immediate ancestor in $T$, keeping the labels at all vertices (except the removed $v$).
     Note that this indeed results in a $k$-tree model of $G' - v$ with the set of parents of each vertex forming a clique.
     For convenience we shall denote the new $k$-tree model again by $(T,L)$.
     Recall that a parent of $v$ is a vertex $u \in N_G(v)$ that is an ancestor of $v$ in $T$.
     Each vertex has at most $k$ parents but by discarding vertices, we may reduce the number of parents of other undecided (or taken) vertices.
    
     For any undecided vertex $v$, let $t_v$ denote the number of tokens at $v$, $p_v$ the number of parents of $v$, and $s_v$ the number of children of $v$ that are in $S$.
     We maintain the following invariants for every undecided vertex $v$:
    
     \begin{enumerate}[{I}1]
        \setcounter{enumi}{1}
        \item If $v$ has undecided children or $s_v \geq 2$, then $t_v \geq s_v$.\label{inv:inner-vertex}
        
        \item If $v$ has no undecided children, then $t_v \geq s_v + p_v - k$. \label{inv:leaf-vertex}
     \end{enumerate}
    
     Note that these invariants hold initially when $S = D = \emptyset$.
     
     Let $v$ be a lowest undecided vertex in $T$.
     I.e., all children of $v$ (if any) are in $S$.
     (Recall that vertices in $D$ are removed from the graph and the $k$-tree model.)
    
     \begin{enumerate}[{Case} 1]
        \item No children of $v$ are in $S$.
    
        We have $s_v = 0$ and by \ref{inv:leaf-vertex} there are $t_v \geq s_v + p_v - k = p_v - k$ tokens at $v$.
        We take $v$, i.e., add $v$ to the set $S$, which grants $k$ tokens, pay $k-p_v$ of these tokens to have the token count at $v$ at $0$, and spend the remaining $p_v$ tokens by putting $1$ token on each of the $p_v$ parents of $v$.
        This maintains the invariants.
    
        \item At least two children of $v$ are in $S$.
    
        We have $s_v \geq 2$ and by \ref{inv:inner-vertex} there are $t_v \geq s_v \geq 2$ tokens at $v$.
        We spend $2$ tokens from $v$ to discard $v$, i.e., add $v$ to the set $D$.
        The invariants are again maintained.
     \end{enumerate}
    
     For the remainder we may assume that each undecided vertex either has an undecided child or exactly one child in $S$.
     Let $v$ be again a lowest undecided vertex in $T$, i.e., we have $s_v = 1$, and let $w$ denote the lowest parent of $v$ in $T$.
    
     \begin{enumerate}[{Case} 1]
        \setcounter{enumi}{2}
        \item $w$ has $t_w \geq 1$ tokens.
    
        We take one token from $v$ and one token from $w$ and use these $2$ tokens to discard $w$.
        This maintains \ref{inv:leaf-vertex} as $v$ loses one token but also one parent.
        
        \item $w$ has another undecided child $u$ different from $v$.
    
        We have that $u$ is also a lowest undecided vertex in $T$, since $v$ and $u$ have the same parent $w$ and $v$ is a lowest undecided vertex.
        In particular, $u$ has no undecided children and we care about \ref{inv:leaf-vertex} at $u$.
        By \ref{inv:inner-vertex} $w$ has $t_w \geq s_w \geq 0$ tokens.
        We take one token from $u$ and one from $v$ and use these $2$ tokens to discard $w$.
        This maintains \ref{inv:leaf-vertex} as $u$ and $v$ each lose one token but also one parent.
     \end{enumerate}
    
     For the remainder we may assume that $v$ is the only undecided child of $w$ and that $w$ has $t_w \leq 0$ tokens.
     By \ref{inv:inner-vertex} we have $t_w \geq s_w \geq 0$, i.e., $t_w = s_w = 0$ and $w$ has no children in $S$.
     Thus we are left with the following case.
    
     \begin{enumerate}[{Case} 1]
        \setcounter{enumi}{4}
        \item $w$ has only $v$ as undecided child and no children in $S$.
    
        In this case we rely on induction.
        We perform a local modification on the $k$-tree model $(T,L)$, replacing $v$ and $w$ by a single new vertex $u$.
        To this end, let $P$ denote the set of all parents of $v$.
        Contract $v$ and $w$ into the immediate ancestor $z$ of $w$ in $T$.
        Add a new vertex $u$ as a leaf to $z$, make $P - \{w\}$ the parents of $u$ by giving $u$ the label of $v$, put $u$ into $S$ and put $1$ additional token on each vertex in $P - \{w\}$.
        This modification costs us $|P|-1 = p_v - 1$ tokens and provides us with $t_v+t_w$ tokens from $v$ and $w$, causing the total cost
        \[
         t_v + t_w - (p_v - 1) \overset{\ref{inv:leaf-vertex}}{\geq} s_v + p_v - k + 0 - p_v + 1 = 2 - k,
        \]    
        which we will balance out by taking one out of $v,w$ and discarding the other. 
        Our choice will be determined by induction. 
        For now, observe that the new situation satisfies our invariants and has one vertex less.
        By induction we get a partition $(S',D')$ of the set $V' = V - (D \cup \{v,w\})$ of all remaining vertices such that $S \subseteq S'$ and $S'$ is $2$-clustered in the modified graph.
        Recall that we have put the artificial vertex $u$ into $S$, and hence $u \in S'$.
        We want to replace $u \in S'$ by one of $v,w$ and discard the other so that the result is $2$-clustered in the original graph.
        
        If $S' \cap P = \emptyset$, then we discard $w$.
        Now, all parents of $v$ are in $D \cup D'$.
        Thus, we can replace $u \in S'$ by $v$ which has only $s_v = 1$ child in $S'$.
        Taking $v$ grants us $k$ tokens and discarding $w$ costs us $2$ tokens, balancing the $2-k$ deficit from the modification.
    
        If $S' \cap P \neq \emptyset$, then let $a$ be a vertex in $S' \cap P$.
        We want to discard $v$ and replace $u \in S'$ by $w$.
        To see that this is possible, let $A$ denote the set of all parents of $w$.
        Then $P - \{w\} \subseteq A$.
        Recall that $P-\{w\}$ is the set of parents of the artificial vertex $u$.
        As $S'$ is $2$-clustered, $u,a \in S'$, and $A$ forms a clique in the original graph $G$, it follows that $G[S' - \{u\}]$ has a component of size $1$ only consisting of vertex $a$.
        As no child of $w$ is in $S' - \{u\}$, we can indeed take $w$ and discard $v$.
        Again, this grants us $k$ tokens and costs us $2$ tokens, balancing the $2-k$ deficit from the modification.
     \end{enumerate}
    
     Observe that by our invariants, this complete case distinction concludes the proof.
\end{proof}

%%%%%%%%%%%%%%%%%
%%             %%
%%    c = 3    %%
%%    k = 2    %%
%%             %%
%%%%%%%%%%%%%%%%%
\section{The case of $3$-clustered sets in graphs of treewidth~$2$}
\label{sec:c3-k2-case}

We shall show in this section that for $k=2$ and $c=3$, the smallest $c$-clustered independence number of $n$-vertex graphs of treewidth $k$ is $\lceil \frac{5}{9}n \rceil$, i.e., $x_{2,3} = \frac59$.
Note that for $k=2$ and $c=3$ we have
\[
    \frac{c}{k+c+1} = \frac{1}{2} < \frac{5}{9} < \frac{3}{5} = \frac{c}{k+c},
\]
i.e., this value lies strictly between the general lower and upper bound in \cref{thm:main}\labelcref{enum:general-c-k}.

We start with an explicit construction for the upper bound.
Let $G_1$ be the $10$-vertex graph shown in the left of \cref{fig:k2_c_3_counterexample}.
For an integer $i \geq 2$ let $G_i$ be the graph obtained from $i$ copies of $G_1$ by identifying vertex $v_6$ of each copy (except the last) with the vertex $v_1$ of the previous copy, and adding an edge between vertex $v_5$ of each copy (except the last) with vertex $v_2$ of the previous copy.
See the right of \cref{fig:k2_c_3_counterexample} for an illustration.
Note that $G_i$ has $9i+1$ vertices and treewidth $2$.
In fact, each $G_i$ is outerplanar as shown in \cref{fig:k2_c_3_counterexample}.

\begin{figure}
    \centering
    \includegraphics{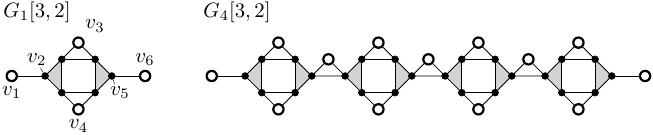}
    \caption{Illustration of the graph $G_i$, $i \geq 1$, with $9i+1$ vertices and no $3$-clustered set of size more than $5i+2$.}
    \label{fig:k2_c_3_counterexample}
\end{figure}

\begin{proposition}\label{prop:x23-upper-bound}
    For each $i \geq 1$ we have $\alpha_3(G_i) \leq 5i+1 = \lceil \frac{5}{9}|V(G_i)| \rceil$.
\end{proposition}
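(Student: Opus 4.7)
The plan is to reduce the bound $\alpha_3(G_i) \leq 5i+1$ to a single structural lemma about the base graph $G_1$, and then to obtain the general bound by a clean partition of $V(G_i)$.

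The key lemma I would establish first is: \emph{every $3$-clustered set $S \subseteq V(G_1)$ satisfies $|S| \leq 6$, and if $|S| = 6$ then $v_1 \in S$.} Equivalently, $|S \setminus \{v_1\}| \leq 5$ for every such $S$. Since $G_1$ has only $10$ vertices, verifying this is a finite case analysis. I would enumerate the ways to choose $6$ vertices of $G_1$ inducing a disjoint union of connected pieces of orders at most $3$ (so each component is $K_1$, $K_2$, $P_3$, or $K_3$), organized by partition type ($3{+}3$, $3{+}2{+}1$, $3{+}1{+}1{+}1$, $2{+}2{+}2$, $2{+}2{+}1{+}1$, and so on), and check that every such $6$-vertex configuration contains $v_1$. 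This finite but delicate case analysis will be the main obstacle of the proof, as it relies on the specific structure of $G_1$ shown in \cref{fig:k2_c_3_counterexample}.

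Given the key lemma, the overall count is easy. Let $H_1, \ldots, H_i$ denote the embedded copies of $G_1$ inside $G_i$, with $V(H_j) \cap V(H_{j+1}) = \{v_1^{(j)}\} = \{v_6^{(j+1)}\}$ for each $j < i$. The ``bridge'' edges $v_2^{(j)} v_5^{(j+1)}$ run strictly between consecutive copies, so the induced subgraph $G_i[V(H_j)]$ equals $H_j$ and is isomorphic to $G_1$. I would partition $V(G_i)$ into blocks
\[
    U_j := V(H_j) \setminus \{v_1^{(j)}\} \text{ for } 1 \leq j < i, \qquad U_i := V(H_i),
\]
assigning each shared vertex $v_1^{(j)} = v_6^{(j+1)}$ to the later block $U_{j+1}$; the $U_j$ then form a genuine partition of $V(G_i)$.

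Finally, for any $3$-clustered set $S$ in $G_i$, each restriction $S \cap V(H_j)$ is $3$-clustered in $H_j \cong G_1$, so the key lemma gives $|S \cap V(H_j)| \leq 5 + [v_1^{(j)} \in S]$. For $j < i$ this yields $|S \cap U_j| = |S \cap V(H_j)| - [v_1^{(j)} \in S] \leq 5$, while $|S \cap U_i| \leq 6$. Summing over the partition,
\[
    |S| \;=\; \sum_{j=1}^{i} |S \cap U_j| \;\leq\; 5(i-1) + 6 \;=\; 5i + 1,
\]
which is the required bound.
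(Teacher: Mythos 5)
Your reduction of the bound for $G_i$ to a single statement about one copy is valid as far as it goes: the blocks $U_j$ do partition $V(G_i)$, the bridge edges never lie inside one copy, and each restriction $S\cap V(H_j)$ is indeed $3$-clustered in $H_j\cong G_1$. The genuine gap is the key lemma itself: it is false, and not as a fixable detail of the case analysis but for a structural reason. What your lemma requires is $\alpha_3(G_1-v_1)\le 5$. Now look at the features of $G_1$ that the actual argument needs: $v_1,v_3,v_4,v_6$ are independent simplicial vertices, $v_1$ is attached only to $v_2$ and $v_6$ only to $v_5$, and every vertex of the two gray triangles has exactly one simplicial neighbour, so that any two vertices of a gray triangle together with their attached simplicial vertices already force a component of size at least $4$. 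After deleting $v_1$, the triangle containing $v_2$ has only two attached simplicial vertices left, and the pair consisting of $v_2$ and one of its triangle neighbours is now guarded by a single simplicial vertex. Hence $G_1-v_1$ (nine vertices) contains a $3$-clustered set of size $6$: take $v_3,v_4,v_6$, then $v_2$ together with one triangle neighbour (a component of exactly $3$ vertices), and one suitably chosen vertex of the other triangle (a component of size at most $2$ with $v_6$). So a $6$-element $3$-clustered set avoiding $v_1$ exists, your per-block bound of $5$ is unavailable, and summing over the blocks only gives a bound of roughly $6i$, far from $5i+1$.

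There is also a sanity check that flags the approach as doomed before any case analysis: if $\alpha_3(G_1-v_1)\le 5$ were true, then vertex-disjoint copies of the nine-vertex, treewidth-$2$ graph $G_1-v_1$ would already certify $x_{2,3}\le\frac59$, and the whole chain construction (identifying $v_6$ of one copy with $v_1$ of the previous one and adding the edges between $v_5$ and $v_2$ of consecutive copies) would be pointless. The ratio $\frac59$ is reached only in the limit and only because of these inter-copy adjacencies; any argument that, like yours, bounds $|S\cap V(H_j)|$ inside each copy separately and discards the bridge edges and the cross-copy role of the identified vertices cannot reach $5i+1$. The paper's proof is global on $G_i$: an exchange argument puts all $3i+1$ simplicial vertices into $S$, and then each of the $2i$ gray triangles contributes at most one vertex, because every pair of triangle vertices is adjacent to at least two simplicial vertices of $S$ --- a count in which the identified vertices protect a triangle vertex in each of the two copies they belong to, which is exactly what is lost when you cut $G_i$ along the shared vertices.
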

\begin{proof}
     Let $S \subseteq V$ be a maximum $3$-clustered set in $G_i$.
     Let $A$ be the set of all vertices $v_1,v_3,v_4,v_6$ from all copies of $G_1$.
     This is, $A$ is the set of $3i+1$ vertices shown in white in \cref{fig:k2_c_3_counterexample}.
     First, we claim that without loss of generality we may assume that $A \subseteq S$.
     So assume that $v \in A$ with $v \notin S$.
     By maximality of $S$, there is a neighbor $u \in N_G(v)$ with $u \in S$.
     Note that $u \notin A$, as $A$ is an independent set in $G_i$.
     The vertices in $A$ are the simplicial vertices of $G_i$, giving that $N_G(v) - u \subseteq N_G(u) - v$.
     Hence $S' = S - u \cup \{v\}$ is also a $3$-clustered set of the same size with $|A - S'| < |A - S|$.
     
     Second, the vertices in $V - A$ are partitioned into $2i$ vertex-disjoint triangles, highlighted in gray in \cref{fig:k2_c_3_counterexample}.
     Given that $A \subseteq S$ and $S$ is $3$-clustered, observe that from each such triangle there is at most one vertex in $S$.
     Thus $|S| \leq |A| + 2i = 5i+1$, as desired.
\end{proof}

By \cref{prop:x23-upper-bound} we have $x_{2,3} \leq \frac59$.
For the proof of the lower bound $x_{2,3} \geq \frac{5}{9}$ we shall show that every graph $G$ of treewidth~$2$ admits a $3$-clustered set of size at least $\frac59 |V(G)|$.
We present this proof without using $k$-tree models.
Instead, it will be more convenient to work with a \emph{$2$-tree $G$ rooted at some edge $e_0 = u_0v_0$}, i.e., a graph that can be constructed starting with $e_0$ by iteratively adding a new vertex to the endpoints of an already constructed edge. We consider \emph{cut pairs} in $G$, i.e., edges $e = uv$ such that $G-u-v$ is disconnected.
There is a connected component of $G-u-v$ for each $w \in N(u) \cap N(v)$ where those in a different component than $u_0,v_0$ are called the children of $u$ and $v$.
In terms of a $2$-tree model $(T,L)$ of $G$ rooted at $u_0$, the children of $e=uv$, say with $v$ below $u$ in $T$, are the highest vertices $w$ in the subtree below $v$ with the label $L(w) \notin \{L(u),L(v)\}$.

\begin{proposition}
    For every graph $G=(V,E)$ of treewidth at most $2$, we have $\alpha_3(G) \geq \frac{5}{9}|V|$.
\end{proposition}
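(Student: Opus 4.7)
The plan is to prove the proposition by strong induction on $n = |V|$. By iteratively adding edges we may assume that $G$ is a $2$-tree rooted at some edge $e_0 = u_0v_0$, since enlarging the edge set can only decrease $\alpha_3$. Small base cases (say $n \leq 9$) are verified directly: every $2$-tree on at most $9$ vertices admits a $3$-clustered set of the required size by inspection. For the inductive step the goal is to peel off a small \emph{block} $B \subseteq V$ at the bottom of the rooted $2$-tree, include a carefully chosen subset $S_B \subseteq B$ with $|S_B|/|B| \geq \tfrac{5}{9}$, and apply induction to the smaller $2$-tree obtained by deleting $B$ from $G$.

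To locate such a block, I would consider a cut pair $e = uv$ that is \emph{lowest} in the rooted $2$-tree, i.e., one for which each child of $e$ has at most triangle-depth of descendants. The children $w_1, \dots, w_t$ of $e$ are the common neighbors of $u$ and $v$ on the far side of $e_0$, and each such $w_i$ is in turn the root of a small remaining subtree. A case analysis on $t$, on the presence of grandchildren, and on how those grandchildren are arranged determines which block to peel off. The prototype tight case is exactly the $9$-vertex gadget that appears as one copy of $G_1$ in \cref{fig:k2_c_3_counterexample}: when such a gadget hangs below $e$, we peel it off, place $5$ of its $9$ vertices in $S$ (namely the analogues of the white simplicial vertices together with one vertex from each of the two triangles), and recurse. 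Easier configurations such as a single leaf child, two leaf children without further gadgetry, or a triangle below a child, all allow peeling off a block of strictly better local ratio, and so those cases are slack.

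To ensure that the $3$-clustered set $S_B \cup S_{G-B}$ returned by the recursion is really $3$-clustered, I would maintain an invariant at the interface: either $u,v \notin S_B$, or each component of $G[S_B]$ touching $\{u,v\}$ is small enough that the recursion can safely extend it without exceeding size $3$. A natural formalisation, modelled on the token discipline in the proof of \cref{prop:lower-bound-c2}, is to grant $4$ tokens for each taken vertex and charge $5$ for each discarded one; the bound $\alpha_3 \geq \tfrac{5}{9}n$ is equivalent to the global balance $4|S| \geq 5|D|$, and local token invariants on undecided ancestors of~$e$ let us pay for whatever commitments the interface forces on us.

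The principal obstacle is the case analysis itself. Because the construction $G_i$ meets the bound exactly, every local reduction must achieve ratio $\tfrac{5}{9}$ on the nose, leaving no slack anywhere; every local configuration near a lowest cut pair has to be identified and matched with the right block to peel off, and the interface token invariants must survive each reduction. Once all these local ratios and the interface bookkeeping are verified, the induction closes and the proposition follows.
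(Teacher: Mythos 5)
Your high-level framework is recognizably the same as the paper's: root the $2$-tree at an edge $e_0$, process bottom-up, keep a balance equivalent to the surplus $9|S_W|-5(|V(G_W)|-2)$, and maintain an interface invariant controlling how components of $G[S_W]$ touch the cut pair $\{u,v\}$ (these are the paper's ``threats''). So you have correctly identified the three ingredients: the rooted $2$-tree decomposition, the token/surplus accounting, and the threat bookkeeping at the boundary.

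However, there is a genuine gap, and it is precisely the part you acknowledge postponing: the case analysis that actually makes the induction close. The substantive content of the paper's proof is the discovery of a \emph{finite, closed} list of nine types $_\alpha(s)_\beta$ with $s\in\{0,4,6,7,8,9\}$ and $\alpha,\beta\in\{0,1,2\}$, together with explicit verification that combining any two list types (either merging the two child-edges of a single vertex $w$, or taking the union of two disjoint child-subsets of $uv$) yields either a good set (surplus $\geq 10$) or another type strictly further along the list. Without exhibiting this finite closed system and checking it, there is no guarantee that your token invariants can be maintained through every local configuration, nor that the process terminates with a good set. Your remark that ``easier configurations all allow peeling off a block of strictly better local ratio'' is in fact not how the argument works: types such as $_1(4)_1$ have surplus only $4$ and can persist and recur arbitrarily many levels up; the surplus does not monotonically improve per level.

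There is a second, more structural issue with your plan of ``peeling the lowest block.'' If you insist on excluding the interface pair $\{u,v\}$ from the selected set (which is needed so the recursion can reuse them), then the lowest block need not, by itself, achieve ratio $\geq\frac59$. For example a single lowest copy of the gadget $G_1$ of \cref{fig:k2_c_3_counterexample}, with its top two vertices reserved for the recursion, contributes a surplus of strictly less than $10$. The paper resolves this not by peeling one block, but by choosing the edge $e$ and the subset $W$ of its children adaptively, allowing surplus from several levels and several siblings to accumulate until a good set appears, or until the process reaches $e_0$ and uses the slack from adding $u_0$ itself. Your proposal fixes $e$ to be a lowest cut pair, which will not always produce a removable block at ratio $\frac59$, so the induction as you describe it does not go through without the upward surplus-propagation mechanism.
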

\begin{proof}
     Since every graph of treewidth $2$ is subgraph of a $2$-tree, we can assume without loss of generality that $G$ is a $2$-tree.
     We proceed by induction on $n = |V|$.
     If $n = 3$, then clearly $\alpha_3(G) = 3 \geq \frac{5}{9}\cdot 3$.
     So assume for the remainder that $n \geq 4$.
    
     We root $G$ at an arbitrary edge $e_0 = u_0v_0$, and for any edge $e = uv$ in $G$ call the common neighbors of $u$ and $v$ that are not in the component of $G - u - v$ that contains $u_0$ or $v_0$ the \emph{children} of $e$.
     Every vertex $w \notin \{u_0,v_0\}$ is the child of exactly one edge $e = uv$ and we call $u$ and $v$ the \emph{parents} of $w$.
     Note that for every edge one of its endpoints is a parent of the other endpoint (where by convention $u_0$ is the parent of $v_0$).
     Throughout this proof for each edge we always list the parent first, i.e., for any edge $uv$ vertex $u$ is a parent of vertex $v$.
    
     For each edge $e = uv$ and each subset $W$ of children of $e$, let $G_W$ denote the subgraph of $G$ induced by $W \cup \{u,v\}$ and all vertices that have an ancestor in $W$.
     Note that $G_W$ is a $2$-tree and we consider it rooted at edge $uv$.
     See \cref{fig:2-tree-illustration} for a schematic illustration.
    
     \begin{figure}
         \centering
         \includegraphics{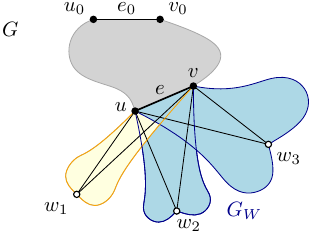}
         \caption{A $2$-tree $G$ rooted at edge $e_0 = u_0v_0$, an edge $e = uv$ in $G$, its children $w_1,w_2,w_3$, and the $2$-tree $G_W$ for $W = \{w_2,w_3\}$ (blue).}
         \label{fig:2-tree-illustration}
     \end{figure}
    
     Our goal is to find an edge $e = uv$ and a subset $W$ of children of $e$ for which $G_W$ admits a $3$-clustered set $S_W$ with $S_W \cap \{u,v\} = \emptyset$ and $|S_W| \geq \frac{5}{9}|V(G_W)|$.
     Let us call such a set $W$ a \emph{good} set.
     For simplicity, if $|W|=1$, i.e., $W = \{w\}$, we also write $S_w$ and $G_w$ instead of $S_{\{w\}}$ and $G_{\{w\}}$.
     Once we have a good set $W$, the result easily follows by induction on $G' = G - V(G_W)$.
     In fact, let $S'$ be a $3$-clustered set in $G'$ of size $|S'| \geq \frac59 |V(G')|$, then $S = S' \cup S_W$ is a $3$-clustered set in $G$ of size
     \[
        |S| = |S'| + |S_W| \geq \frac59 |V(G')| + \frac59 |V(G_W)| = \frac59 |V(G)|.
     \]
     To find a good set, we consider edges in bottom-up order, that is, descendants before ancestors, starting with the edges that have no children.
     For each considered edge $e = uv$ with $W$ being a subset of all children of $e$, we store a $3$-clustered set $S_W$ of $G_W$ with $S_W \cap \{u,v\} = \emptyset$.
     We shall ensure that $|S_W| \geq \frac{5}{9}|V(G_W - \{u,v\})| = \frac{5}{9}(|V(G_W)| - 2)$.
     With this in mind, we define
     \[
        \mathrm{sp}(W) = 9|S_W| - 5(|V(G_W)| - 2)
     \]
     as the \emph{surplus} of $W$.
     Thus we shall ensure that $\mathrm{sp}(W) \geq 0$, while if $\mathrm{sp}(W) \geq 10$, then
     \[
        9|S_W| \geq 5(|V(G_W)|-2) + 10 = 5|V(G_W)| \quad \text{and thus} \quad |S_W| \geq \frac59 |V(G_W)|.
     \]
     In other words, if $\mathrm{sp}(W) \geq 10$, then $W$ is good.
    
     Looking for good sets, let us assume again that we currently consider edge $e=uv$ with a subset $W$ of children of $e$.
     Besides determining $S_W$ and thereby $\mathrm{sp}(W)$, we also define the \emph{threat} at $u$ from $W$, denoted by $\mathrm{th}_W(u)$ as the total size of all components of $G[S_W]$ that contain a neighbor of $u$, i.e.,
     \[
        \mathrm{th}_W(u) = \# \{ x \in S_W \colon x \in C, C \text{ a component of } G[S_W], N(u) \cap C \neq \emptyset\}.
     \] 
     In other words, $S_W \cup \{u\}$ is a $3$-clustered set if and only if $\mathrm{th}_W(u) \leq 2$, i.e., the threat at $u$ from $W$ is at most $2$.
     The threat $\mathrm{th}_W(v)$ at the other endpoint $v$ of $e$ is defined analogously.
     For brevity, let us combine surplus and threats and simply say that
     \[
        \text{the \emph{type} of $W$ is $_\alpha(s)_\beta$ with $\alpha = \mathrm{th}_W(u)$, $s = \mathrm{sp}(W)$, $\beta = \mathrm{th}_W(v)$.}
     \] 
     For example, if $W = \{w\}$ and $w$ has no children, then we set $S_W = \{w\}$ and thus $\mathrm{sp}(W) = 4$, $\mathrm{th}_W(u) = \mathrm{th}_W(v) = 1$, which gives $W$ the type $_1(4)_1$.
     Note that $_\alpha(s)_\beta \neq {}_\beta(s)_\alpha$ for $\alpha \neq \beta$ because we assume that $u$ is a parent of $v$ and not vice versa.
    
     With these definitions in place, a good set $W$ is one whose type $_\alpha(s)_\beta$ satisfies $s \geq 10$.
     In the entire argument below, we shall compute the types of sets in a bottom-up approach and either find a good set or encounter and store one of the following nine different types:
     \begin{equation}
        _0(0)_0 \quad _1(4)_1, \quad _2(8)_2, \quad _1(7)_2, \quad _2(7)_1, \quad _1(6)_1, \quad _1(9)_2, \quad _2(9)_1, \quad _1(8)_1\label{eq:all-types}
     \end{equation}
     The first type $_0(0)_0$ is used only for $W = \emptyset$ with the corresponding $3$-clustered set $S_W = \emptyset$ which has a surplus of $0$ and a threat of $0$ at each parent.
     To start the process, we set this type at every edge $e$ of $G$ that has \emph{no} children, i.e., for the set $W = \emptyset$ of all children of $e$.
    
     We proceed to consider a single vertex $w$ with parent edge $uv$, and assume that we already determined the type $_{\alpha_1}(s_1)_{\beta_1}$ of the set $X$ of all children of $uw$ and the type $_{\alpha_2}(s_2)_{\beta_2}$ of the set $Y$ of all children of $vw$, and that both these types are among the nine types in \eqref{eq:all-types}.
     In each of the cases below, we shall find a good set or define a $3$-clustered set $S_w$ such that the type of $W = \{w\}$ is again one of the nine types in \eqref{eq:all-types}.
     Note that the total threat at $w$ from the types of $X$ and $Y$ is $\beta_1 + \beta_2$.
    
     \begin{enumerate}[{Case} 1]
        \item $\beta_1 + \beta_2 \leq 2$.{\ \\}
            We set $S_w = S_X \cup S_Y \cup \{w\}$.
            Since $\beta_1 + \beta_2 \leq 2$, the set $S_w$ is indeed a $3$-clustered set.
            The corresponding surplus is calculated by
            \begin{multline*}
                \mathrm{sp}(\{w\}) = 9|S_w|-5(|V(G_w)|-2) = 9(|S_X|+|S_Y|+1) - 5(|V(G_X)|-2+|V(G_Y)|-2+1)\\
                = 9|S_X|-5(|V(G_X)|-2) + 9|S_Y| - 5(|V(G_Y)|-2) + 9 - 5 = s_1+s_2 + 4,
            \end{multline*}
             i.e., the sum of the two surpluses plus $4$.
            Observe from the list \eqref{eq:all-types} of all types, that $\mathrm{sp}(\{w\}) \geq 10$ and thus $\{w\}$ a good set, unless we are in one of the following cases.
            \begin{itemize}
                \smallskip
                
                \item If $_{\alpha_1}(s_1)_{\beta_1} = {}_0(0)_0$ and $_{\alpha_2}(s_2)_{\beta_2} = {}_0(0)_0$, then the type of $\{w\}$ is $_1(4)_1$.
                
                \item If $_{\alpha_1}(s_1)_{\beta_1} = {}_0(0)_0$ and $_{\alpha_2} (s_2)_{\beta_2} = {}_1(4)_1$, then the type of $\{w\}$ is $_2(8)_2$.
    
                \item Symmetrically, if $_{\alpha_1}(s_1)_{\beta_1} = {}_1(4)_1$ and $_{\alpha_2}(s_2)_{\beta_2} = {}_0(0)_0$, the type of $\{w\}$ is $_2(8)_2$.
    
                \smallskip
            \end{itemize}
            In each case the type of $\{w\}$ is again on the list \eqref{eq:all-types}.
    
        \item $\beta_1 + \beta_2 \geq 3$.{\ \\}
            In this case we set $S_w = S_X \cup S_Y$ and calculate the surplus 
            \begin{multline*}
                \mathrm{sp}(\{w\}) = 9|S_w|-5(|V(G_w)|-2) = 9(|S_X|+|S_Y|)-5(|V(G_X)|-2+|V(G_Y)|-2+1)\\
                = 9|S_X|-5(|V(G_X)|-2) + 9|S_Y| - 5(|V(G_Y)|-2) - 5 = s_1+s_2 - 5,
            \end{multline*}
            i.e., the sum of the two surpluses minus $5$.
            For the threats at $u$ and $v$, we have $\mathrm{th}_w(u) = \alpha_1$ and $\mathrm{th}_w(v) = \alpha_2$.
            Again, observe from the list \eqref{eq:all-types} of all types, that $\mathrm{sp}(\{w\}) \geq 10$ and thus $\{w\}$ is a good set, unless we are in one of the following cases.
            \begin{itemize}
                \smallskip
                
                \item If $_{\alpha_1}(s_1)_{\beta_1} = {}_1(4)_1$ and $_{\alpha_2}(s_2)_{\beta_2} = {}_1(7)_2$, then the type of $\{w\}$ is $_1(6)_1$.
    
                \item If $_{\alpha_1}(s_1)_{\beta_1} = {}_1(4)_1$ and $_{\alpha_2}(s_2)_{\beta_2} = {}_1(9)_2$, then the type of $\{w\}$ is $_1(8)_1$.
    
                \item If $_{\alpha_1}(s_1)_{\beta_1} = {}_1(4)_1$ and $_{\alpha_2}(s_2)_{\beta_2} = {}_2(8)_2$, then the type of $\{w\}$ is $_1(7)_2$.
    
                \smallskip
    
                \item If $_{\alpha_1}(s_1)_{\beta_1} = {}_1(6)_1$ and $_{\alpha_2}(s_2)_{\beta_2} = {}_1(7)_2$, then the type of $\{w\}$ is $_1(8)_1$.
                
                \item If $_{\alpha_1}(s_1)_{\beta_1} = {}_1(6)_1$ and $_{\alpha_2}(s_2)_{\beta_2} = {}_2(8)_2$, then the type of $\{w\}$ is $_1(9)_2$.
    
                \smallskip
            \end{itemize}
            Here we omitted the symmetric cases, such as if $_{\alpha_1}(s_1)_{\beta_1} = {}_2(8)_2$ and $_{\alpha_2}(s_2)_{\beta_2} = {}_1(4)_1$, then the type of $\{w\}$ is $_2(7)_1$.
            As before, in each case the type of $\{w\}$ is again on the list \eqref{eq:all-types}.
    \end{enumerate}
    
    Having computed the type of each one-element subset of children of $uv$, we proceed to combine these to obtain, again, either a good set or a type for the set $W$ of \emph{all} children of $uv$ to be one of the nine in \eqref{eq:all-types}.
    To this end, assume that we already determined the type $_{\alpha_1}(s_1)_{\beta_1}$ of a non-empty subset $X$ of children of $uv$, and the type $_{\alpha_2}(s_2)_{\beta_2}$ of another non-empty subset $Y$ of children of $uv$ with $X \cap Y = \emptyset$, and that both these types are among the nine types in \eqref{eq:all-types}.
    We shall consider the set $W = X \cup Y$ now.
    
    We set $S_W = S_X \cup S_Y$ and calculate the surplus
    \begin{multline*}
        \mathrm{sp}(\{w\}) = 9|S_w|-5(|V(G_w)|-2) = 9(|S_X|+|S_Y|)-5(|V(G_X)|-2+|V(G_Y)|-2)\\
        = 9|S_X|-5(|V(G_X)|-2) + 9|S_Y| - 5(|V(G_Y)|-2) = s_1+s_2,
    \end{multline*}
    i.e., the sum of the two surpluses.
    For the threats at $u$ and $v$, we have $\mathrm{th}_W(u) = \alpha_1 + \alpha_2$ and $\mathrm{th}_W(v) = \beta_1 + \beta_2$.
    Again, observe from the list \eqref{eq:all-types} of all types, that $\mathrm{sp}(W) \geq 10$ and thus $W$ is a good set, unless both, $X$ and $Y$ have type $_1(4)_1$.
    But in this case $W$ has type $_2(8)_2$, which is again on the list \eqref{eq:all-types}.
    
    \medskip
    
    Thus by the above, we either find a good set, or determine a $3$-clustered set $S_W$ for the set $W$ of \emph{all} children of $uv$, such that the corresponding type of $W$ is on the list \eqref{eq:all-types}.
    Finally, we should argue that we will eventually find a good set.
    To this end, observe that in each of the above cases, whenever we determine a type $_\alpha(s)_\beta$ of some set $W$ based on two already determined types $_{\alpha_1}(s_1)_{\beta_1}$ and $_{\alpha_2}(s_2)_{\beta_2}$, then the new type $_\alpha(s)_\beta$ is further right in the list \eqref{eq:all-types} than both $_{\alpha_1}(s_1)_{\beta_1}$ and $_{\alpha_2}(s_2)_{\beta_2}$.
    Thus, this procedure will eventually find a good set or we have determined the type of the set $W_0$ of all children of the base edge $u_0v_0$ and it is on the list \eqref{eq:all-types}.
    In the latter case $S = S_{W_0} \cup \{u_0\}$ is a $3$-clustered set and we have
    \[
        9|S| = 9|S_{W_0}|+9 = \mathrm{sp}(W_0) + 5(|V|-2) + 9 \geq 4 + 5|V| - 10 + 9 = 5|V|+3. 
    \]
    In particular $|S| \geq \frac59 |V|$, as desired.
\end{proof}

%%%%%%%%%%%%%%%%%
%% larger c, k %%
%%%%%%%%%%%%%%%%%
\subsection{Extension to larger $c$ and $k$}

Let us remark that the proof strategy for \cref{prop:x23-upper-bound} with types, surplus, and threats can be adjusted to each fixed $c$ and $k$, to certify lower bounds of the form $x_{k,c} \geq \frac{p}{q}$, and possibly also find matching upper bound examples.

For $k=2$, fixed $c$ and test threshold $p/q$, we determine (as in the proof above) for each edge $uv$ and set $W$ of children of $uv$ a corresponding $c$-clustered set $S_W$.
This determines a type, which includes the surplus $\mathrm{sp}(W) = q|S_W|-p(|V(G_W)|-2)$ and in general three (not just two) threats.
The threat at $u$ is the total size of components of $G[S_W]$ that contain a neighbor of $u$ \emph{but no} neighbor of $v$, the threat at $v$ is symmetrical, and the common threat at $u,v$ is the total size of components of $G[S_W]$ that contain a neighbor of $u$ \emph{and} a neighbor of $v$ (hence a vertex of $W$).
        
We start with the type corresponding to $W = \emptyset$, which has surplus~$0$ and each threat~$0$.
Then, we exhaustively combine two known types to a single type by \textbf{(1)} knowing the types for all children of $uw$ and all children of $vw$ and combining these to the type for the single children $W = \{w\}$ of $uv$, and \textbf{(2)} knowing the types of two disjoint subsets $X,Y$ of children of $uv$ and combining these to the type of $X \cup Y$.
If this, as in the above proof, generates only a \emph{finite} list of types (which implies that each surplus is non-negative) without cyclic dependencies, this proves that indeed $x_{2,c} \geq \frac{p}{q}$.
On the other hand, if we encounter a type with negative surplus, then tracing back the combinations, we obtain a particular $2$-tree $G$.
Linearly arranging copies of $G$, this could\footnote{it did, in all cases we considered} lead to a family of $2$-trees certifying that $x_{2,c} < \frac{p}{q}$.

We have implemented this strategy for $k=2$ and small $c$.
The obtained results suggest that the true value of $x_{2,c}$ depends on the parity of $c$ modulo $3$:
\[
    \renewcommand{\arraystretch}{1.3}
    \begin{tabularx}{\linewidth}{r||Z|W|Y|Z|W|Y|Z|W|Y|Z|W|Y|Z|W}        
        $c$ & $2$ & $3$ & $4$ & $5$ & $6$ & $7$ & $8$ & $9$ & $10$ & $11$ & $12$ & $13$ & $14$ & $15$ \\
        \hline
        $x_{2,c}$ &
        $\frac{1}{2}$ &
        $\frac{5}{9}$ &
        $\frac{8}{13}$ &
        $\frac{2}{3}$ &
        $\frac{9}{13}$ &
        $\frac{13}{18}$ &
        $\frac{3}{4}$ &
        $\frac{13}{17}$ &
        $\frac{18}{23}$ &
        $\frac{4}{5}$ &
        $\frac{17}{21}$ &
        $\frac{23}{28}$ &
        $\frac{5}{6}$ &
        $\frac{21}{25}$ \\[2pt]
    \end{tabularx}
\]

Finally, the same approach can work for larger $k$, but the computational effort increases.
In rooted $k$-trees we would have parent $k$-cliques $K$, a type would store a threat for each non-empty subset of $K$, and to determine the type of a single children $W = \{w\}$ of $K$, we would combine the known types of all children of the $k$-subsets of $K \cup \{w\}$ containing $w$.
As this approach determines $x_{k,c}$ only for singular values of $k$ and $c$, we did not embark upon this path.

%%%%%%%%%%%%%%%%%%%
%%               %%
%% OTHER CLASSES %%
%%               %%
%%%%%%%%%%%%%%%%%%%
\section{Other graph classes}
\label{sec:conclusions}

Let us briefly discuss other classes besides the class of treewidth-$k$ graphs.
In accordance with \eqref{eq:definition-x-kc}, for any graph class $\mathcal{G}$ let us define
\[
    x_{\mathcal{G},c} = \inf \{\frac{\alpha_c(G)}{|V(G)|} \colon G \in \mathcal{G}\}\text{.}
\]

\begin{lemma}\label{lem:lb}
    If a graph class $\mathcal{G}$ is closed under vertex-disjoint unions and $G \in \mathcal{G}$ is $k$-connected on $k+c$ vertices, then $x_{\mathcal{G},c} \leq \frac{c}{k+c}$.
\end{lemma}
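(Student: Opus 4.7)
The plan is to show that the given $k$-connected graph $G$ on $k+c$ vertices already satisfies $\alpha_c(G)/|V(G)| = c/(k+c)$, and then use closure under disjoint unions to boost this single example into an infinite family realizing the same ratio, which suffices for the $\liminf$ definition of $x_{\mathcal{G},c}$.

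First, I would establish that $\alpha_c(G) = c$. The lower bound $\alpha_c(G) \geq c$ is immediate: any set $S$ of exactly $c$ vertices is trivially $c$-clustered, since every component of $G[S]$ has at most $|S| = c$ vertices. For the upper bound, suppose towards a contradiction that $S \subseteq V(G)$ has $|S| \geq c+1$. Then $|V(G) \setminus S| \leq (k+c) - (c+1) = k-1 < k$. Since $G$ is $k$-connected, removing strictly fewer than $k$ vertices cannot disconnect $G$, so $G[S] = G - (V(G) \setminus S)$ is connected. But then $G[S]$ is a single connected component of size at least $c+1$, contradicting the assumption that $S$ is $c$-clustered. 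Hence $\alpha_c(G) \leq c$.

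Next, I would use the disjoint-union closure of $\mathcal{G}$ to construct, for every positive integer $m$, a graph $G_m \in \mathcal{G}$ obtained as the vertex-disjoint union of $m$ copies of $G$. Then $|V(G_m)| = m(k+c)$, and since $c$-clustered sets split across connected components independently, $\alpha_c(G_m) = m \cdot \alpha_c(G) = mc$. Therefore
\[
    \frac{\alpha_c(G_m)}{|V(G_m)|} = \frac{mc}{m(k+c)} = \frac{c}{k+c}
\]
for every $m \geq 1$. The sequence $(G_m)_{m \geq 1}$ consists of arbitrarily large members of $\mathcal{G}$, so
\[
    x_{\mathcal{G},c} = \liminf \left\{ \frac{\alpha_c(G')}{|V(G')|} : G' \in \mathcal{G} \right\} \leq \lim_{m \to \infty} \frac{\alpha_c(G_m)}{|V(G_m)|} = \frac{c}{k+c},
\]
as desired.

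There is no real obstacle here; the only conceptually important step is the $k$-connectivity argument showing that any more than $c$ vertices must induce a connected subgraph in $G$. Everything else is routine bookkeeping about $c$-clustered sets behaving additively under disjoint unions.
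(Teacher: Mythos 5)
Your proof is correct and follows essentially the same route as the paper: you use $k$-connectivity to show that any $c+1$ (or more) vertices of $G$ induce a connected subgraph, hence $\alpha_c(G)\leq c$, and then take vertex-disjoint unions of copies of $G$ to obtain arbitrarily large members of $\mathcal{G}$ with ratio exactly $\frac{c}{k+c}$. The only difference is that you spell out the additivity of $\alpha_c$ over components and the $\liminf$ step explicitly, which the paper leaves implicit.
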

\begin{proof}
     Take a $k$-connected $G\in \mathcal{G}$ on $k+c$ vertices.
     Whenever we remove $k-1$ vertices, we get a connected subgraph on $c+1$ vertices.
     Hence, $\alpha_c(G) \leq c = \frac{c}{k+c}|V(G)|$, and  taking vertex-disjoint unions of $G$, we conclude $x_{\mathcal{G},c} \leq \frac{c}{k+c}$.
\end{proof}

In fact, \cref{obs:upper-bound} giving $x_{k,c} \leq \frac{c}{k+c}$ is just a special case of \cref{lem:lb} applied to the class of all graphs of treewidth~$k$.

Looking at the class $\mathcal{P}$ of all planar graphs, we can also apply~\cref{lem:lb}.
For $c\in \{2,\ldots,6\}$ we can take a $4$-connected planar graph on $c+4$ vertices, which gives an upper bound of $x_{\mathcal{P},c} \leq \frac{c}{c+4}$.
From $c\geq 7$ on one can even choose a $5$-connected planar graph and get an upper bound of $x_{\mathcal{P},c} \leq \frac{c}{c+5}$.
These bounds are not tight in general.
For instance for the icosahedron graph $G$ (shown in the middle of \cref{fig:example-clustered-sets}) we have $\alpha_5(G) = 6$ and $\alpha_6(G) = 7$, and taking vertex-disjoint unions of it yields the better bounds $\alpha_{\mathcal{P},5} \leq \frac{5}{12}$ and $\alpha_{\mathcal{P},6} \leq \frac{1}{2}$.
For $c=2$, \cref{lem:lb} yields $\alpha_{\mathcal{P},2} \leq \frac{1}{3}$ by taking the octahedron (shown on the left of \cref{fig:example-clustered-sets}), which we conjecture to be best-possible.

\begin{conjecture}\label{conj:1/3}
    In every planar graph there is a set $S$ on at least a third of the vertices, such that each vertex in $S$ is adjacent to at most one other vertex in $S$, i.e., $\alpha_{\mathcal{P},2}=\frac{1}{3}$. 
\end{conjecture}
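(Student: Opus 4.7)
The plan is to attack \cref{conj:1/3} through the stronger statement $\chi_2(G) \leq 3$ for every planar graph $G$, which via the averaging inequality \eqref{eq:trivial-LB} immediately implies $\alpha_2(G) \geq |V(G)|/3$. First I would reduce to the case where $G$ is a planar triangulation, since adding edges can only decrease $\alpha_2$ and any $2$-clustered $3$-coloring of a supergraph restricts to one of a subgraph. A natural way to construct such a $3$-coloring is to start from a proper $4$-coloring supplied by the Four Colour Theorem and then reassign the vertices of the fourth colour class one by one to one of the other three classes, each time choosing a class in which the vertex has at most one neighbour; the core question becomes whether such a sequence of local reassignments can always be completed.

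As an alternative route that avoids the coloring reformulation, one can mirror the token-based proof of \cref{prop:lower-bound-c2} and directly produce a $2$-clustered set $S$ with $|S| \geq |V(G)|/3$ by marking each vertex as taken or discarded, letting taking grant $2$ tokens and discarding cost $1$ token, so that balancing the token account yields the bound. Planarity would enter through Euler's formula, which guarantees a vertex of degree at most $5$; the argument then splits into cases according to the local structure at such a low-degree vertex, exploiting pairings whenever two adjacent low-degree vertices can be jointly placed into $S$.

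The main obstacle in either route is the octahedron, which is $4$-regular, $4$-connected, planar, and realizes the bound $1/3$ with equality. Since octahedral patches of $G$ leave zero local slack for either argument, any discharging or token scheme must be very carefully calibrated to accumulate positive surplus only in regions that strictly deviate from the octahedral structure. Consequently, I expect that a straightforward induction will not suffice, and that either a structural result describing when dense $4$-regular planar patches can be avoided, or a refined global invariant that balances the shortage inside octahedral regions with the slack in the rest of the graph, will be needed; a promising direction is to adapt the type-based accounting sketched at the end of \cref{sec:c3-k2-case} to the planar setting, where types now record threats across the boundary of a planar patch rather than at the endpoints of a single edge.
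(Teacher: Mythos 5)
First, note that \cref{conj:1/3} is stated in the paper as an open conjecture: the paper offers no proof, only the remarks that it would follow from the Albertson--Berman conjecture (an induced forest on half the vertices, combined with $x_{1,2}=\frac23$ from \cref{thm:main}~\cref{enum:k1}) and that the best unconditional bound known is $\frac{2}{5}\cdot\frac{2}{3}=\frac{4}{15}$ via Borodin's acyclic $5$-coloring. So your proposal should be judged on whether it closes the gap, and it does not. Your first route rests on a false strengthening: a $2$-clustered $3$-coloring is exactly a $3$-coloring in which every colour class induces a graph of maximum degree at most $1$ (defect $1$), and it is known that planar graphs do not in general admit such colorings --- in fact, for every fixed $c$ there are planar graphs with $\chi_c=4$ (the clustered chromatic number of the class of planar graphs is $4$; already the classical defective-coloring examples in the spirit of Cowen, Cowen and Woodall rule out three colours with defect $1$). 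Hence $\chi_2(G)\le 3$ together with \eqref{eq:trivial-LB} cannot be the road to the conjecture. Even internally the route is incomplete: a vertex of the fourth colour class of degree at least $6$ may have two neighbours in each of the other three classes, and successive reassignments interact, so the ``core question'' you isolate is precisely the unproved part.

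Your second route is closer in spirit to the paper's methods, and the accounting is right ($2$ tokens granted per taken vertex, $1$ token per discarded vertex yields $|S|\ge n/3$), but the proof of \cref{prop:lower-bound-c2} draws all of its power from the $k$-tree model: the choice of a lowest undecided vertex, the fact that parents form a clique, and the contraction step that again yields a $k$-tree model. No analogue of this structure is provided for planar graphs, and Euler's formula plus a minimum-degree-$5$ vertex does not by itself give the invariants \invref{inv:inner-vertex}--\invref{inv:leaf-vertex} any meaning. You correctly identify the octahedron as a tight, zero-slack configuration and concede that a straightforward induction will not suffice; at that point the proposal becomes a research plan rather than a proof. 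If you want partial progress along lines the paper actually supports, the productive direction is the induced-forest reduction: any improvement of the $\frac25$ bound on induced forests in planar graphs immediately improves the $\frac{4}{15}$ lower bound towards $\frac13$, whereas a direct token or type-based scheme for planar graphs would need a genuinely new decomposition that your sketch does not supply.
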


We remark that \cref{conj:1/3} is implied by the Albertson-Berman Conjecture~\cite{AB79} that every planar graph $G$ admits an induced forest on at least half of the vertices.
In fact, any such forest would contain a $2$-clustered set on at least $\frac23$ of its vertices (hence $\frac13$ of the vertices of $G$) by \cref{thm:main}\labelcref{enum:k1}.
Along the same lines, we get the best known lower bound by the acyclic $5$-coloring of Borodin~\cite{Bor79}, which implies the existence of an induced forest on at least $\frac25$ of the vertices.
Hence, $\alpha_{\mathcal{P},2} \geq \frac25 \cdot \frac23 = \frac{4}{15}$.

\medskip

Concerning $x_{\mathcal{G},c}$ asymptotically, recall that for any graph $G$ we have $\alpha_1(G) \leq \alpha_2(G) \leq \cdots$ and hence for any graph class $\mathcal{G}$ we have $x_{\mathcal{G},1} \leq x_{\mathcal{G},2} \leq \cdots \leq 1$.
Edwards and McDiarmid~\cite{EM94} define a graph class $\mathcal{G}$ to be \emph{fragmentable} if for every $\varepsilon > 0$ there exist integers $c,n_0$ such that each graph $G \in \mathcal{G}$ with $n \geq n_0$ vertices admits a $c$-clustered set of size at least $(1-\varepsilon)n$.
In other words, $\mathcal{G}$ is fragmentable if $x_{\mathcal{G},c} \to 1$ as $c \to \infty$.
Edwards and McDiarmid prove that any class with \emph{strongly sublinear separators}\footnote{There exists a fixed $\varepsilon < 1$ such that every $n$-vertex $G \in \mathcal{G}$ has a $S \subseteq V(G)$ of size $O(n^{\varepsilon})$ such that each component of $G-S$ has at most $\frac{n}{2}$ vertices.} is fragmentable~\cite{EM94}.
This includes planar graphs~\cite{LT79}, graphs of bounded orientable genus~$g$~\cite{GHT84}, proper minor-closed graph classes~\cite{AST90}, $k$-planar graphs~\cite{GB07}, and touching graphs of $d$-dimensional balls~\cite{MTTV97}.

\begin{observation}
    A graph class $\mathcal{G}$ is fragmentable if and only if $\lim_{c \to \infty} x_{\mathcal{G},c} = 1$.
\end{observation}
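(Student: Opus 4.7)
The plan is to prove the two implications directly from the definitions, after first clarifying the natural meaning of $\liminf$ in $x_{\mathcal{G},c} = \liminf \{\alpha_c(G)/|V(G)| \colon G \in \mathcal{G}\}$, which I read as $\lim_{n \to \infty} \inf \{\alpha_c(G)/|V(G)| \colon G \in \mathcal{G},\; |V(G)| \geq n\}$. Note that, as remarked in the paper, the sequence $(x_{\mathcal{G},c})_{c \geq 1}$ is monotonically non-decreasing and bounded above by $1$, so $\lim_{c \to \infty} x_{\mathcal{G},c}$ exists and lies in $[0,1]$.

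For the forward direction, suppose $\mathcal{G}$ is fragmentable and fix $\varepsilon > 0$. By definition there are integers $c, n_0$ such that every $G \in \mathcal{G}$ with $|V(G)| \geq n_0$ satisfies $\alpha_c(G)/|V(G)| \geq 1-\varepsilon$. Unpacking the definition of $x_{\mathcal{G},c}$, this directly gives $x_{\mathcal{G},c} \geq 1-\varepsilon$, and hence $\lim_{c' \to \infty} x_{\mathcal{G},c'} \geq x_{\mathcal{G},c} \geq 1-\varepsilon$ by monotonicity. As $\varepsilon$ was arbitrary and the limit is at most $1$, it equals $1$.

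For the converse, suppose $\lim_{c \to \infty} x_{\mathcal{G},c} = 1$ and fix $\varepsilon > 0$. Choose $c$ large enough that $x_{\mathcal{G},c} > 1 - \varepsilon/2$. By the definition of $x_{\mathcal{G},c}$ as a limit of infima, there exists $n_0$ such that $\inf\{\alpha_c(G)/|V(G)| \colon G \in \mathcal{G},\; |V(G)| \geq n_0\} \geq 1-\varepsilon$, i.e., every $G \in \mathcal{G}$ with at least $n_0$ vertices admits a $c$-clustered set of size at least $(1-\varepsilon)|V(G)|$. This is exactly fragmentability.

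I expect no real obstacles here: both directions are a mechanical translation between the two definitions, once the $\liminf$ in $x_{\mathcal{G},c}$ is interpreted as a limit of infima over graphs of growing order. The only step that even deserves a sentence of care is choosing a small enough buffer (like $\varepsilon/2$) in the backward direction so that the \emph{strict} inequality $x_{\mathcal{G},c} > 1 - \varepsilon/2$ translates into the \emph{non-strict} bound $\alpha_c(G)/|V(G)| \geq 1-\varepsilon$ on the tail.
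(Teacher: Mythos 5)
Your proof is correct and matches the paper's (implicit) argument: the paper treats the observation as a direct restatement of the definition of fragmentability (signaled by its ``In other words'' remark just before the observation), and your two implications are exactly the careful $\varepsilon$-unpacking of that equivalence, including the right reading of $\liminf$ as the limit of infima over graphs of growing order and the use of monotonicity of $(x_{\mathcal{G},c})_c$.
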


While we might be able to derive an explicit lower bound on $\alpha_c(G)$ for any $c$ and any $G \in \mathcal{G}$ from a proof that $\mathcal{G}$ is fragmentable, our results suggest finding large induced subgraphs of bounded treewidth in $G$.
For example, for any proper minor-closed class $\mathcal{G}$ there exists a constant $k$ such that any graph $G = (V,E)$ with $G \in \mathcal{G}$ admits a $2$-coloring of $V$ for which each monochromatic induced subgraph has treewidth at most $k$~\cite{DDOSRSV04}.
This gives $\alpha_c(G)/|V| \geq \frac12 \cdot x_{k,c}$ for every $c \geq 0$.

%%%%%%%%%%%%%%%%%%%%%
%% acknowledgments %%
%%%%%%%%%%%%%%%%%%%%%
\subsection*{Acknowledgments}

The authors thank David Wood for pointing us to~\cite{Wo} and Thomas Bl\"asius for mentioning the connection to $k$-vertex separators and $\ell$-component order connected sets. Part of this work was conducted at the 11th Annual Workshop on Geometry and Graphs held at the Bellairs Research Institute in March 2024. We are grateful to the organizers and participants for providing an excellent research environment.
KK was supported by he grant of The Natural Science Foundation of Hebei Province (project No. A2023205045),
 PID2022-137283NB-C22 funded by MICIU/AEI/10.13039/501100011033 and ERDF/EU, Severo Ochoa and María de Maeztu Program for Centers and Units of Excellence in R\&D (CEX2020-001084-M), ANR project MIMETIQUE: ANR-25-CE48-4089-01. TU was supported by the Deutsche Forschungsgemeinschaft (DFG, German Research Foundation) – 520723789.

\bibliographystyle{plainurl}
\bibliography{lit}

\end{document}